\tikzset{every picture/.style={line width=0.75pt}} 
\newcommand{\Z}{\mathbb{Z}}
\newcommand{\PP}{\mathbb{P}}
\newcommand{\M}{\mathcal{M}}
\newcommand{\calW}{\mathcal{W}}
\newcommand{\calO}{\mathscr{O}}
\newcommand{\g}[2]{g^{#1}_{#2}}
\newcommand{\BN}[3]{\mathcal{M}^{#2}_{#1,#3}}
\newcommand{\maxk}{\kappa}
\DeclareMathOperator{\Pic}{Pic}
\newcommand{\floor}[1]{\left\lfloor #1 \right\rfloor}
\newcommand{\ceil}[1]{\left\lceil #1 \right\rceil}
\newcommand{\dmax}{d_{max}}
\newtheorem{theorem}{Theorem}[section]
\newtheorem{lemma}[theorem]{Lemma}
\newtheorem{prop}[theorem]{Proposition}
\newtheorem{cor}[theorem]{Corollary}
\theoremstyle{definition}
\newtheorem{defn}[theorem]{Definition}
\theoremstyle{definition}
\newtheorem{remark}[theorem]{Remark}
\newtheorem{theoremintro}{Theorem}
\newtheorem{conjectureintro}{Conjecture}
\newcommand{\calG}{\mathcal{G}}
\title{Maximal Brill--Noether loci via the gonality stratification} 
\author{Asher Auel}
\address{Department of Mathematics\\%
	Dartmouth College\\%
	Kemeny Hall\\%
	Hanover, NH 03755}
\email{asher.auel@dartmouth.edu}
\author{Richard Haburcak}
\email{richard.haburcak@dartmouth.edu}
\author{Hannah Larson}
\address{Department of Mathematics\\%
	University of California, Berkeley\\%
	Berkeley, CA 94720}
\email{hlarson@berkeley.edu}
\begin{document}

\thispagestyle{empty}
\vspace*{-.9cm}

\begin{abstract}
We study the restriction of Brill--Noether loci to the gonality
stratification of the moduli space of curves of fixed genus. As an
application, we give new proofs that Brill--Noether loci with
$\rho=-1$ have distinct support, and for fixed $r$ give lower
bounds on when one direction of the non-containments of the Maximal
Brill--Noether Loci Conjecture hold for Brill--Noether loci of rank
$r$ linear systems. Using these techniques, we also show that
Brill--Noether loci corresponding to rank $2$ linear systems are
maximal as soon as $g\ge 28$ and prove the Maximal Brill--Noether Loci
Conjecture for $g=20$.
\end{abstract}

\maketitle

\vspace*{-0.9cm}

\section*{Introduction}
\label{Introduction}

If classical Brill--Noether theory concerns linear systems on general algebraic curves, then \emph{refined Brill--Noether theory} can be viewed as the study of linear systems on special curves.
The main
theorem of Brill--Noether theory \cite{gieseker,griffiths_harris}
implies that the general smooth projective curve $C$ of genus $g$ admits a
nondegenerate morphism $C \to \PP^r$ of degree $d$ if and only if the
\emph{Brill--Noether number}
\[
\rho(g,r,d) \colonequals g - (r+1)(g-d+r)
\]
is non-negative.
A degree $d$ map $C \to \PP^r$ determines a degree $d$ line bundle $L$ on $C$ together with a subspace $V \subset H^0(L)$ of dimension $r + 1$. Such a pair $(L, V)$ is called a $g^r_d$ on $C$.

The last few years have seen a major advance in a refined
Brill--Noether theory for curves of fixed gonality
\cite{cook-powell_jensen,jensen_ranganathan,larson_larson_vogt_2020global,larson_refined_BN_Hurwitz,pflueger}.
In particular, the general smooth projective $k$-gonal curve $C$
of genus $g$ admits a $\g{r}{d}$ if and only if Pflueger's
Brill--Noether number
\[
\rho_k(g,r,d) \colonequals \max_{0 \leq \ell \leq r^\prime} \rho(g,r-\ell,d)-\ell k
\]
where $r^\prime \colonequals \min\{r, g-d+r-1\}$, is non-negative. More broadly, one of
the main goals of refined Brill--Noether theory is to understand when
a ``general'' curve with a $\g{r}{d}$ admits a $\g{s}{e}$, where
here, ``general'' should mean a general curve in a
suitable component of the Brill--Noether locus
\[
\BN{g}{r}{d} = \{ C \in \M_g \; : \; C \text{~admits~a~} \g{r}{d} \}
\]  
when $\rho(g,r,d) < 0$. 
Motivated by conjectures
concerning lifting line bundles on curves in K3 surfaces, the first
two authors posed a conjecture concerning the containments
between Brill--Noether loci.  Adding basepoints and removing
non-basepoints determines various trivial containments between
Brill--Noether loci. Accounting for these, one obtains the notion of the
\emph{expected maximal Brill--Noether loci}, see \Cref{subsection: exp max BN loci}. 

\begin{conjectureintro}[Maximal Brill--Noether Loci Conjecture]
\label{Conj Max BN loci} 
For any $g \geq 3$, except for $g=7,8,9$, the expected maximal
Brill--Noether loci are maximal with respect to containment.
\end{conjectureintro}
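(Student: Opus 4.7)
The plan is to establish, for every ordered pair of expected maximal data $((r,d),(s,e))$ in genus $g\notin\{7,8,9\}$, the non-containment $\BN{g}{r}{d}\not\subseteq\BN{g}{s}{e}$. To exhibit a separating curve I would use the gonality stratification of $\M_g$: by Pflueger's formula, a general $k$-gonal curve lies in $\BN{g}{r}{d}$ exactly when $\rho_k(g,r,d)\geq 0$, so any gonality $k$ with $\rho_k(g,r,d)\geq 0$ and $\rho_k(g,s,e)<0$ produces a witness. The first task is thus to show that, for any pair of expected maximal data not already comparable via the trivial operations of adding basepoints and removing non-basepoints, such a $k$ exists.

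The second step is a combinatorial analysis of the piecewise-linear function $k\mapsto\rho_k(g,r,d)$. Because this function is non-decreasing in $k$, for each pair $(r,d)$ there is a well-defined \emph{critical gonality} $k_0(r,d)$ at which $\BN{g}{r}{d}$ first contains the general $k$-gonal curve, and gonality separates $(r,d)$ from $(s,e)$ whenever $k_0(r,d)<k_0(s,e)$. Comparing the defining maximum in Pflueger's formula term by term, I would argue that $k_0$ is strictly monotone along chains of expected maximal data for fixed $r$; this should give non-containments in an asymptotic range of $g$, consistent with the rank $2$ statement in the abstract holding for $g\geq 28$.

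For the remaining pairs --- those with $k_0(r,d)\geq k_0(s,e)$, so that every $k$-gonal stratum on which $\BN{g}{r}{d}$ is generically present also contains $\BN{g}{s}{e}$ --- one must descend below the general $k$-gonal curve and work inside a proper substratum. Natural candidates are components of $\BN{g}{r}{d}$ arising from curves on del Pezzo or $K3$ surfaces, Hurwitz--Brill--Noether loci of higher Clifford index, or splitting-type refinements of the scrollar invariants on the $k$-gonal locus. Each supplies a one-parameter family of curves whose Brill--Noether behavior can be computed directly, and combining these with the gonality arguments should settle all pairs in any fixed genus, for instance $g=20$.

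The principal obstacle is uniformity. The conjecture quantifies over every admissible genus and all expected maximal data, and the supply of auxiliary constructions grows more subtle as $r$ and $g$ grow. In particular, when two loci agree on every $k$-gonal stratum, genuinely new non-lifting techniques are required, and it is precisely this difficulty that restricts the complete results in the paper to $r=1$, to rank $2$ for $g\geq 28$, and to individual low-genus cases. A full proof of the conjecture will therefore likely proceed in layers: an asymptotic gonality-based resolution for each fixed $r$, combined with a finite residual case analysis in low genus handled by surface-theoretic or degeneration methods.
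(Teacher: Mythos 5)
The statement you set out to prove is a \emph{conjecture}, and the paper does not prove it: it remains open. The paper establishes only partial results toward it --- the case $g=20$, maximality of $\M^2_g$ for $g\ge 28$ (with finitely many possible exceptions below that), and, for each fixed $r$, one direction of the non-containments for $g$ sufficiently large. Your proposal is likewise a program rather than a proof: your final paragraph explicitly defers a ``finite residual case analysis'' to unspecified surface-theoretic or degeneration methods, so the universal quantifier over all $g\notin\{7,8,9\}$ and all pairs of expected maximal loci is never discharged. Judged as a proof, that is the fatal gap; judged as a program, it does reproduce the paper's actual architecture (a gonality-threshold invariant to separate most pairs, plus K3-type auxiliary constructions for the rest), and your basic witness criterion --- find $k$ with $\rho_k(g,r,d)\ge 0$ and $\rho_k(g,s,e)<0$ --- is exactly the paper's starting point.

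However, your combinatorial step contains two concrete errors. First, $\rho_k(g,r,d)$ is non-\emph{increasing} in $k$, not non-decreasing: writing $\rho_k(g,r,d)=\max_{0\le\ell\le r'}\bigl(\rho(g,r-\ell,d)-\ell k\bigr)$, every term weakly decreases as $k$ grows. Curves of low gonality are the special ones and carry \emph{more} linear series, so the general $k$-gonal curve lies in $\BN{g}{r}{d}$ precisely for $k$ up to a maximal value --- the paper's invariant $\maxk(g,r,d)$ --- and there is no ``critical gonality at which $\BN{g}{r}{d}$ first contains the general $k$-gonal curve.'' The separation criterion must accordingly be flipped: if $\maxk(g,r,d)>\maxk(g,s,e)$ then $\BN{g}{r}{d}\nsubseteq\BN{g}{s}{e}$, witnessed by a general curve of gonality $\maxk(g,r,d)$. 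Second, the strict monotonicity of the threshold along expected maximal data that your plan relies on is false: the paper computes $\maxk(24,2,17)=\maxk(24,4,23)$ and $\maxk(27,2,19)=\maxk(27,3,23)$, which is exactly why $g=24$ and $g=27$ remain open even for $r=2$; $\maxk$ is only \emph{generally}, not strictly, decreasing in $r$. So even after correcting the monotonicity direction, the gonality invariant provably cannot separate all pairs of expected maximal loci, and the residual cases in your plan are not an artifact of insufficient effort but a structural limitation of the method --- which is why the conjecture is still a conjecture.
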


\noindent
In other words, the conjecture states that for any two expected
maximal Brill--Noether loci $\BN{g}{r}{d}$ and $\BN{g}{s}{e}$, there
exists a curve $C$ of genus $g$ admitting a $\g{r}{d}$
but not a $\g{s}{e}$, and vice versa.

Using the recently established refined Brill--Noether theory for
curves of fixed gonality, one deduces (see \cite[Proposition
1.6]{auel_haburcak_2022}) that the expected maximal
$\BN{g}{1}{\lfloor\frac{g+1}{2}\rfloor}$ is not contained in any
expected maximal Brill--Noether loci $\BN{g}{r}{d}$ with $r \geq 2$,
and is thus maximal, except when $g=8$.  In this note, we explain how
additional non-containments between expected maximal
Brill--Noether loci can be obtained by restricting to the $k$-gonal
locus.  In particular, we obtain the following.

\begin{theoremintro}
\label{introthm}
Fix $r \geq 2$.  For $g$ sufficiently large, there is a non-containment $\BN{g}{r}{d}\nsubseteq \BN{g}{s}{e}$ for all expected maximal Brill--Noether loci with $s>r$. 
\end{theoremintro}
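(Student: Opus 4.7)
The plan is to use the refined Brill--Noether theorem for $k$-gonal curves as a witness-producing machine: to establish $\BN{g}{r}{d} \nsubseteq \BN{g}{s}{e}$ it suffices to find an integer $k$ with
\[
\rho_k(g,r,d) \;\geq\; 0 \;>\; \rho_k(g,s,e),
\]
since then the general $k$-gonal curve admits a $\g{r}{d}$ but not a $\g{s}{e}$, and the two general-position conditions are simultaneously satisfied on a dense open subset of the $k$-gonal locus. Each summand $\rho(g,r-\ell,d) - \ell k$ in Pflueger's maximum is non-increasing in $k$, hence so is $\rho_k$; writing $k_\max(r,d)$ for the largest integer $k$ with $\rho_k(g,r,d) \geq 0$, the desired $k$ exists precisely when $k_\max(r,d) > k_\max(s,e)$. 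The first step is therefore to compute $k_\max$ asymptotically on expected maximal loci.

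Setting $\beta = g - d + r$, Pflueger's formula rearranges as
\[
\rho_k(g,r,d) = \rho(g,r,d) + \max_{0 \leq \ell \leq r'}\bigl[\ell(r+1+\beta-k) - \ell^2\bigr],
\]
whose continuous optimum at $\ell^* = (r+1+\beta-k)/2$ equals $(r+1+\beta-k)^2/4$, so $k_\max(r,d) \approx r+1+\beta - 2\sqrt{-\rho(g,r,d)}$. For $\BN{g}{r}{d}$ to be expected maximal, $d$ must be the largest value with $\rho(g,r,d) < 0$ (up to Serre duality); since incrementing $d$ raises $\rho$ by $r+1$, this forces $-(r+1) \leq \rho(g,r,d) \leq -1$. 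Combined with $\beta \sim g/(r+1)$ this yields the clean asymptotic $k_\max(r,d) = g/(r+1) + O_r(1)$.

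Applied to $(s,e)$, and using the convention $e \leq g-1$ afforded by Serre duality, the analogous estimate bounds the possible rank of an expected maximal locus by $s = O(\sqrt{g})$ and gives $k_\max(s,e) \leq s+1 + g/(s+1) + O(\sqrt{s+1})$. For fixed $r \geq 2$ and any $s > r$ in this range, the gap
\[
\frac{g}{r+1} - \Bigl(s+1+\frac{g}{s+1}\Bigr) = \frac{g(s-r)}{(r+1)(s+1)} - (s+1)
\]
is positive and grows at least like $\sqrt{g}$ even in the extremal case $s \sim \sqrt{g}$, dominating the lower-order corrections once $g$ is sufficiently large depending on $r$. For such $g$ we obtain $k_\max(r,d) > k_\max(s,e)$ for every expected maximal $(s,e)$ with $s > r$, producing the desired witness curve.

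The main obstacle is converting the continuous asymptotics into a uniform statement: one must verify that the optimizer $\ell^*$ stays in the admissible range $[0,r']$, that the integrality of $\ell$ does not spoil the estimate, and pin down an explicit threshold $g_0(r)$ covering the entire two-parameter family of expected maximal $(s,e)$ simultaneously. Given the tight bound $|\rho(g,r,d)| \leq r+1$ on expected maximal loci this is largely bookkeeping, but the high-rank regime $s \sim \sqrt{g}$, where the $O(\sqrt{s+1})$ corrections and the Serre-duality boundary $e = g-1$ become relevant, is where the analysis needs to be the most careful.
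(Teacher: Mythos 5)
Your proposal is correct and takes essentially the same route as the paper: your $k_{\max}$ is exactly the paper's invariant $\maxk(g,r,d)$, your witness criterion is \Cref{Theorem makx>maxk' implies non-containment}, and your asymptotic evaluation $k_{\max}\approx r+1+\beta-2\sqrt{-\rho}$ via the continuous optimizer $\ell^*$ of Pflueger's formula, specialized to expected maximal loci where $-(r+1)\le\rho\le-1$, reproduces \Cref{prop formula for maxk}, \Cref{rholem}, and the bounds of \Cref{lemma general bounds on maxk}. The bookkeeping you defer (checking that $\ell^*$ stays in $[0,r']$ for expected maximal loci with rank at least $2$, that integrality of $\ell$ costs nothing, and that the comparison survives the extremal regime $s\sim\sqrt{g}$) is precisely what the paper's \Cref{maxformula}, \Cref{lemma -1/4 doesn't matter in maxk bound}, and \Cref{Theorem one direction of non-containments of max BN loci} carry out, the last giving the explicit threshold $g\ge 4(r+1)^{5/2}+(r+1)^2+2(r+1)^{3/2}$.
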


\noindent In fact, we provide an explicit bound for $g$ in terms of $r$ in \Cref{Theorem one direction of non-containments of max BN loci}.

\medskip 

In \cite{auel_haburcak_2022}, the first two authors proved the Maximal
Brill--Noether Loci Conjecture for $g \leq 19$ and $g=22,23$ using K3
surface techniques. In \Cref{subsec:20}, we show how the techniques
developed to prove \Cref{introthm} by restricting to the $k$-gonal
locus also allow us deduce the following.

\begin{theoremintro}
\label{introthm20}
The Maximal Brill--Noether Loci Conjecture holds for $g=20$.
\end{theoremintro}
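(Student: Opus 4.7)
\medskip

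The plan is to verify the conjecture for $g=20$ by checking, for every ordered pair of expected maximal Brill--Noether loci $\BN{20}{r}{d}$ and $\BN{20}{s}{e}$, that $\BN{20}{r}{d}\nsubseteq\BN{20}{s}{e}$. First I would enumerate the expected maximal loci for $g=20$ using the classification of trivial containments recalled in \Cref{subsection: exp max BN loci}. This yields a small finite list, so the conjecture reduces to finitely many pairwise non-containments.

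For each pair, the strategy, following \Cref{introthm}, is to find a gonality $k$ with $2\leq k\leq 11$ such that Pflueger's invariant satisfies $\rho_k(20,r,d)\geq 0$ while $\rho_k(20,s,e)<0$. The existence \cite{pflueger} and non-existence \cite{cook-powell_jensen,jensen_ranganathan,larson_larson_vogt_2020global,larson_refined_BN_Hurwitz} halves of the refined Brill--Noether theorem for $k$-gonal curves then produce a general $k$-gonal curve of genus $20$ admitting a $\g{r}{d}$ but no $\g{s}{e}$, certifying the non-containment. Since $\rho_k$ is an explicit piecewise-linear function of $k$, locating a separating $k$ for each pair is a finite computation that I would organize as a table indexed by $(r,d,s,e)$ and $k$.

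Non-containments involving the gonality pencil ($r=1$) follow from \cite[Proposition 1.6]{auel_haburcak_2022}, and many of the remaining pairs with $r,s\geq 2$ are handled by \Cref{Theorem one direction of non-containments of max BN loci} when $g=20$ lies above its explicit threshold. The main obstacle will be the finitely many pairs that fall below that threshold, together with pairs for which no single $k$ separates $(r,d)$ from $(s,e)$ via Pflueger's inequality alone. Because the abstract already flags $g=20$ as being below the $g\ge 28$ range where rank-$2$ loci are uniformly maximal, I expect the rank-$2$ against rank-$\geq 3$ comparisons to be the delicate ones; and I expect only a handful of such exceptional pairs.

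For each such exceptional pair I would attempt one of two refinements: either sharpen the dimension count on a specific $k$-gonal stratum, using that the image of $\BN{20}{s}{e}$ in the gonality stratification may itself lie in a proper substratum, or construct an explicit witness curve of prescribed gonality carrying a $\g{r}{d}$ but provably admitting no $\g{s}{e}$, adapting the gonality-stratified techniques of the paper. If a stubborn pair resists both refinements one can fall back on K3 or Hirzebruch surface constructions as in \cite{auel_haburcak_2022}. The principal risk, and the place where the argument could require genuinely new input rather than bookkeeping, is a pair of loci both of which non-trivially meet every gonality stratum available for $g=20$ with the correct $\rho_k$ signs to rule the argument out; I would identify such pairs first to see whether they exist at all.
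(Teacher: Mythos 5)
Your core tool --- separating two loci by finding a gonality $k$ with $\rho_k(20,r,d)\ge 0$ and $\rho_k(20,s,e)<0$ --- is exactly the paper's criterion (\Cref{Theorem makx>maxk' implies non-containment} combined with \Cref{JR}), and it does produce the paper's one genuinely new non-containment. But your plan has a structural gap: this method is intrinsically one-directional, and for $g=20$ it fails for fully half of the ordered pairs, not for ``a handful of exceptional pairs.'' The expected maximal loci are $\BN{20}{1}{10}$, $\BN{20}{2}{15}$, $\BN{20}{3}{17}$, $\BN{20}{4}{19}$, with $\maxk$ equal to $10$, $8$, $6$, $5$ respectively (by \Cref{prop formula for maxk}). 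Gonality separation therefore proves $\BN{20}{r}{d}\nsubseteq\BN{20}{s}{e}$ precisely when $r<s$: six of the twelve required non-containments. For the six reverse directions no choice of $k$, and no refinement of dimension counts on gonality strata, can ever work: a witness for $\BN{20}{s}{e}\nsubseteq\BN{20}{r}{d}$ with $s>r$ must carry a $\g{s}{e}$ but no $\g{r}{d}$, and if it were general in its gonality stratum $\M^1_{20,k}$, then admitting a $\g{s}{e}$ forces $k\le\maxk(20,s,e)<\maxk(20,r,d)$, whence by \Cref{JR} it also admits a $\g{r}{d}$. So every such witness is special in its gonality stratum, outside the reach of the Pflueger/Jensen--Ranganathan theory entirely. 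Your guess that the rank-$2$ versus rank-$\ge 3$ comparisons are the delicate ones is also off: those are handled by $\maxk$ ($8>6>5$) in the forward direction, and the genuinely hard pairs are all of $\BN{20}{s}{e}\nsubseteq\BN{20}{r}{d}$ with $s>r$, except $\BN{20}{2}{15}\nsubseteq\BN{20}{1}{10}$, which follows from codimension ($1<2$). (A smaller slip: \Cref{Theorem one direction of non-containments of max BN loci} never applies at $g=20$, since its threshold for $r=2$ is already $g\geq 28$; one must compute $\maxk$ directly, which is easy.)

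What rescues the theorem is your last-resort fallback, promoted to the main ingredient: the K3-surface techniques of \cite{auel_haburcak_2022} had already established all of the reverse-direction non-containments, reducing the $g=20$ conjecture to the single statement $\BN{20}{3}{17}\nsubseteq\BN{20}{4}{19}$. The paper's proof is then two lines: cite that reduction and compute $\maxk(20,3,17)=6>5=\maxk(20,4,19)$. Your proposal becomes correct only if the K3 input of \cite{auel_haburcak_2022} is acknowledged as carrying half the load (citing it is legitimate, since it is prior published work); as written, with gonality as the primary engine and K3 constructions as an afterthought for stray cases, the plan stalls on six pairs it cannot touch.
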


Furthermore, we reduce the Maximal Brill--Noether Loci Conjecture
in genus $21$ to verifying just a single non-containment
$\BN{21}{3}{18}\nsubseteq \BN{21}{4}{20}$. There has been a flurry of work on this conjecture by many authors \cite{bud2024brillnoether,bh_2024maximal,bigas2023brillnoether}, showing that it holds in genus $g\leq 23$ and by work of \cite{CHOI2022,CHOI20141458} in genera $g$ such that \[g+1 \text{ or } g+1 \in \left\{ \operatorname{lcm}(1,2,\dots,n) \text{ for some } n \in\mathbb{N}\right\}.\]

\subsection*{Outline} In \Cref{sec: background}, we give background on Brill--Noether loci and Brill--Noether theory of curves of fixed gonality. In \Cref{sec: maximum gonality of BN loci}, we study the maximum gonality stratum contained in a Brill--Noether locus, and show how it can be used to prove non-containments of Brill--Noether loci. In \Cref{sec: distinguishing general BN loci}, we prove further non-containments of Brill--Noether loci. In \Cref{sec: distinguishing maximal BN loci}, we focus on expected maximal Brill--Noether loci, give a new proof that Brill--Noether loci with $\rho=-1$ are distinct, and prove an explicit version of \Cref{introthm} in \Cref{Theorem one direction of non-containments of max BN loci}.

\subsection*{Acknowledgments} The authors would like to thank Xuqiang
Qin for helpful conversations. The first
author received partial support from Simons Foundation grant 712097
and National Science Foundation grant DMS-2200845. The second author would like to
thank the Hausdorff Research Institute for Mathematics funded by the Deutsche Forschungsgemeinschaft (DFG, German Research Foundation) under Germany's Excellence Strategy – EXC-2047/1 – 390685813 for their
generous hospitality during the preparation of this work.  This research was partially conducted during the period the third author served as a Clay Research Fellow.

\section{Brill--Noether loci}\label{sec: background}

Throughout this paper, we work exclusively over the complex numbers, but we note that there are analogous results for the Brill--Noether theory for curves of fixed gonality in positive characteristic.

\subsection{Brill--Noether loci}
Brill--Noether theory studies maps of curves $C$ to projective space. A
nondegenerate morphism $C \to \PP^r$ of degree $d$ is determined by a
$\g{r}{d}$, namely, a point in the space 
\[
G^r_d(C) \colonequals \{(L,V) \;\mid\; L\in\Pic^d(C),~ V\subseteq
H^0(C,L),~ \dim V=r+1\}.
\] 
The image of the natural map $G^r_d(C)\to\Pic^d(C)$ is
\[
W^r_d(C)\colonequals \{L\in\Pic^d(C) \;\mid\; h^0(C,L)\ge r+1\}.
\]
These spaces can be globalized to moduli spaces $\calG^r_d\to \M_g$ and $\calW^r_d\to\M_g$ over the moduli space $\M_g$ of
smooth curves of genus $g$, where the fiber above $C$ is $G^r_d(C)$
and $W^r_d(C)$, respectively. The Brill--Noether loci
\[
\BN{g}{r}{d} \colonequals \{C\in\M_g \mid C \text{ admits a
$\g{r}{d}$}\}
\] 
are the images of the corresponding maps $\calG^r_d\to\M_g$.

The Brill--Noether--Petri theorem
\cite{griffiths_harris,lazarsfeld:Brill-Noether_without_degenerations}
states that for a general curve $C$ of genus $g$, the variety $W^r_d(C)$ is
non-empty exactly when the Brill--Noether
number \[\rho(g,r,d)\colonequals g-(r+1)(g-d+r)\] is non-negative. Consequently, when
$\rho(g,r,d)\ge 0$, we have $\BN{g}{r}{d}=\M_g$.
Meanwhile, when $\rho(g,r,d)<0$,
$\BN{g}{r}{d}$ is a proper subvariety of $\M_g$, all of whose
components have codimension at most $-\rho(g,r,d)$ \cite{steffen_1998}. It is known that
Brill--Noether loci with $-3 \le \rho(g,r,d)\le -1$ have codimension
exactly $-\rho$, and Brill--Noether loci with $\rho=-1$ and $\BN{g}{2}{d}$ with $\rho=-2$ are
irreducible \cite{CHOI2022,Eisenbud_Harris_1989,steffen_1998}.

The stratification of $\M_g$ by Brill--Noether loci and the
interaction of various Brill--Noether loci is useful in the study of
the birational geometry of $\M_g$, see \cite{Farkas2000,
harris_mumford}.  Brill--Noether loci with $\rho=-1$ have been studied
by Harris, Mumford, Eisenbud, and Farkas
\cite{eisenbud_harris,Eisenbud_Harris_1989,Farkas2000,Farkas_2001,harris_mumford},
in particular, in the study of the Kodaira dimension of
$\M_{23}$. More recently, Choi, Kim, and
Kim~\cite{CHOI2012377,CHOI20141458} showed in a series of papers that
Brill--Noether divisors have distinct support. Choi and
Kim~\cite{CHOI2022} showed that Brill--Noether loci with $\rho=-2$ are not contained in each other, and that $\BN{g}{2}{d}$ with $\rho=-2$ are irreducible; and further showed
that Brill--Noether loci with $\rho=-2$ are not contained in certain
Brill--Noether divisors; for new proofs of these non-containments see
\Cref{theorem BN loci with rho=-1 are distinct} and
\Cref{theorem rho=-2 loci not contained in BN divisors improvement}.

\subsection{Expected maximal Brill--Noether loci}\label{subsection: exp max BN loci}

There are various containments among Brill--Noether loci. In
particular, there are \emph{trivial containments}
$\BN{g}{r}{d}\subseteq \BN{g}{r}{d+1}$ obtained by adding a basepoint
to a $\g{r}{d}$ on $C$; and $\BN{g}{r}{d}\subseteq \BN{g}{r-1}{d-1}$
when $\rho(g,r-1,d-1)<0$ by subtracting a non-basepoint, cf.\
\cite{Farkas2000,Lelli-Chiesa_the_gieseker_petri_divisor_g_le_13}. Modulo
these trivial containments, the first two
authors~\cite{auel_haburcak_2022} defined the \emph{expected maximal
  Brill--Noether loci} as the $\BN{g}{r}{d}$ where for fixed rank
$r\ge 1$, satisfying $2r\le d\le g-1$, the degree $d$ is maximal such
that $\rho(g,r,d)<0$ and $\rho(g,r-1,d-1)\ge 0$. Note that (after
accounting for Serre duality, which gives
$\BN{g}{r}{d}=\BN{g}{g-d+r-1}{2g-2-d}$) every Brill--Noether locus
with $\rho(g,r,d)<0$ is contained in at least one expected maximal
Brill--Noether locus. They then posed \Cref{Conj Max BN loci}, which
says that the expected maximal Brill--Noether loci should be maximal
with respect to containment, except when $g = 7, 8, 9$.  (In genus
$7, 8,$ and $9$, there are unexpected containments of Brill--Noether
loci coming from projections from points of multiplicity $\ge 2$ in
genus $7$ and $9$ \cite[Propositions 6.2 and 6.4]{auel_haburcak_2022}
or from a trisecant line in genus $8$, as shown by Mukai \cite[Lemma
3.8]{Mukai_Curves_and_grassmannians_1993}.)

Let $\gamma(r, d) := d - 2r$ be the Clifford index.  

\begin{lemma} \label{rbound}
For an expected maximal Brill--Noether locus $\M^r_{g,d}$, we have
$r\leq \ceil{\sqrt{g}-1}$. More precisely, an expected maximal
Brill--Noether locus $\BN{g}{r}{d}$ exists for some $d$ if and only if
\begin{equation} \label{rcases} 1\leq r \leq \begin{cases}
	\ceil{\sqrt{g}-1} & \text{if~} g\geq\floor{\sqrt{g}}^2+\floor{\sqrt{g}}\\
	\floor{\sqrt{g}-1} & \text{if~} g<\floor{\sqrt{g}}^2+\floor{\sqrt{g}}.
\end{cases}
\end{equation}
\end{lemma}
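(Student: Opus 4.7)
The plan is to reparameterize using $s \colonequals g - d + r$: in these coordinates $\rho(g,r,d) = g-(r+1)s$ and $\rho(g,r-1,d-1) = g-rs$, so the two defining sign conditions of an expected maximal locus become
\[
\frac{g}{r+1} < s \le \frac{g}{r},
\]
while $2r \le d \le g-1$ becomes $r+1 \le s \le g-r$. For fixed $r$, maximality of $d$ corresponds to minimality of $s$, so the existence of an expected maximal locus $\BN{g}{r}{d}$ for some $d$ is equivalent to the existence of an integer $s$ in the intersection of these two ranges.

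Next, I would show that this integer-existence problem collapses to the single inequality $r(r+1) \le g$. If $g \ge r(r+1)$, then $g/(r+1) \ge r$, so any integer $s > g/(r+1)$ automatically satisfies $s \ge r+1$; meanwhile the interval $(g/(r+1),\, g/r]$ has length $g/(r(r+1)) \ge 1$ and hence contains an integer. Conversely, if $g < r(r+1)$ then $g/r < r+1$, so every integer $s \le g/r$ is at most $r$, violating $s \ge r+1$. A separate easy check, nontrivial only for $r=1$, handles the upper constraint $s \le g-r$.

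It then remains to translate $r(r+1) \le g$ into the stated formula. Set $r_0 \colonequals \floor{\sqrt{g}}$. Since $g < (r_0+1)^2 \le (r_0+1)(r_0+2)$, the value $r = r_0+1$ never works, so the maximum valid $r$ is either $r_0$ or $r_0 - 1$, according to whether $r_0(r_0+1) = \floor{\sqrt{g}}^2 + \floor{\sqrt{g}} \le g$ or not. In the first case $g \ge \floor{\sqrt{g}}^2 + \floor{\sqrt{g}} > \floor{\sqrt{g}}^2$ forces $\sqrt{g}$ to be non-integral, so $\ceil{\sqrt{g}-1} = \floor{\sqrt{g}} = r_0$; in the second case one always has $\floor{\sqrt{g}-1} = \floor{\sqrt{g}} - 1 = r_0 - 1$. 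This identifies the maximum $r$ with the two cases of \Cref{rcases}, and the uniform bound $r \le \ceil{\sqrt{g}-1}$ is immediate.

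The main obstacle is bookkeeping rather than mathematical: keeping straight the off-by-one distinction between $\floor{\sqrt{g}}$ and $\ceil{\sqrt{g}-1}$ depending on whether $g$ is a perfect square, and confirming that the constraint $s \le g-r$ never becomes binding in a way that lowers the bound.
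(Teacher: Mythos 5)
Your proof is correct, and it takes a genuinely different route from the paper's. The paper argues directly in the $(r,d)$-plane: it first observes that if $\BN{g}{r}{d}$ is expected maximal and $\rho(g,r',d')<0$ with $d<d'\le g-1$ then $r<r'$, so the largest admissible rank is realized by an expected maximal locus of degree $g-1$ or $g-2$; it then evaluates $\rho(g,\ceil{\sqrt{g}-1},g-1)=g-\ceil{\sqrt{g}}^2$, $\rho(g,\ceil{\sqrt{g}-1}-1,g-2)$, and $\rho(g,\floor{\sqrt{g}-1},g-2)$, splitting into cases according to whether $g$ is a perfect square and whether $g\ge\floor{\sqrt{g}}^2+\floor{\sqrt{g}}$, and in each case exhibits which locus at degree $g-1$ or $g-2$ is expected maximal. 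You instead reparameterize by $s=g-d+r$, under which the sign conditions become $g/(r+1)<s\le g/r$ and existence becomes an integer-in-an-interval problem; since $(g/(r+1),\,g/r]$ has length $g/(r(r+1))$, you obtain the clean intermediate criterion that a rank-$r$ expected maximal locus exists if and only if $g\ge r(r+1)$, from which downward-closedness in $r$ is immediate and the dichotomy in \eqref{rcases} is pure bookkeeping with $r_0=\floor{\sqrt{g}}$. Your route is arguably cleaner and yields a more memorable characterization; the paper's computation buys explicit identification of the extremal locus in each case (e.g.\ the trivial containment $\BN{g}{\sqrt{g}}{g-1}\subseteq\BN{g}{\sqrt{g}-1}{g-2}$ when $g$ is a square), which is the kind of structural information the paper reuses. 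Two loose ends you should write out rather than assert: first, the check that $s\le g-r$ never binds --- for $r\ge 2$ and $g\ge r(r+1)$ one has $g/r\le g-r$, since this is equivalent to $g(r-1)\ge r^2$ and $r(r+1)(r-1)\ge r^2$ holds for $r\ge 2$, while for $r=1$ one may take $s=\floor{g/2}+1\le g-1$ once $g\ge 3$; second, at $g=2$ your criterion $r(r+1)\le g$ admits $r=1$ even though no $d$ with $2r\le d\le g-1$ exists, so the equivalence (like the lemma itself, which has the same defect) should carry the standing assumption $g\ge 3$.
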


\begin{proof}
First observe that if $\BN{g}{r}{d}$ is expected maximal and $\rho(g,
r', d') < 0$ with $d < d' \leq g - 1$, then $r < r'$. Indeed, since
$\BN{g}{r}{d}$ is expected maximal, we must have $\rho(g, r, d') \geq
0$. But $\rho(g, r', d') < 0$, hence we must have $r' > r$, since
$\rho(g, r, d')$ is decreasing as a function of $r$. Thus, if
$\BN{g}{r}{d}$ is expected maximal then $r \leq r'$ where $r'$ is such
that $\BN{g}{r'}{g-1}$ or $\BN{g}{r^\prime}{g-2}$ is expected maximal (in case there is a trivial containment $\BN{g}{r^\prime +1}{g-1}\subseteq \BN{g}{r'}{g-2}$).

As observed in \cite[Remark 1.2]{auel_haburcak_2022}, the maximum in the $(r,\gamma)$-plane of the graph of $\rho(g,r,d)=0$ occurs at $r=\sqrt{g}-1$, the intersection of $\rho(g,r,d)=0$ and $d=g-1$. As the line $d=g-1$ has slope $-2$, the trivial containments occur with slope $\infty$ or with slope $-1$, and $\rho(g,r,d)$ is an increasing function of $d$, one can expect that the maximum $r$ of expected maximal loci occurs when $r\approx \sqrt{g}-1$. We show that for expected maximal loci $r\leq \ceil{\sqrt{g}-1}$, and in some cases there is a trivial containment giving $r\leq \floor{\sqrt{g}-1}$.

We compute that $\rho(g,\ceil{\sqrt{g}-1},g-1) = g - \ceil{\sqrt{g}}^2
< 0$ unless $g$ is a square.  When $g$ is a square, we see that
$\rho(g,\ceil{\sqrt{g}-1},g-2) = g -
\sqrt{g}(\sqrt{g}+1) < 0$ and that
$\rho(g,\ceil{\sqrt{g}-1}-1,g-3) = g - (\sqrt{g})^2 + 1 = 1 > 0$,
hence the Brill--Noether locus $\BN{g}{\sqrt{g}}{g-1}$ is trivially
contained in $\BN{g}{\sqrt{g}-1}{g-2}$, and the latter is expected
maximal.  Note that when $g$ is square, $g<\floor{\sqrt{g}}^2+\floor{\sqrt{g}}$.

So we assume that $g$ is not a square.   
Noting that
$\rho(g,\ceil{\sqrt{g}-1},g) = \rho(g,\ceil{\sqrt{g}-1}-1,g-2) = g - \ceil{\sqrt{g}}(\ceil{\sqrt{g}}-1)
= g - \floor{\sqrt{g}}(\floor{\sqrt{g}}+1)$,
we see that $\BN{g}{\ceil{\sqrt{g}-1}}{g-1}$ is expected maximal if
and only if $g \geq \floor{\sqrt{g}}^2 + \floor{\sqrt{g}}$.

When $g < \floor{\sqrt{g}}^2 + \floor{\sqrt{g}}$, we see that
$\rho(g,\floor{\sqrt{g}-1},g-2) = g- \floor{\sqrt{g}}^2 +
\floor{\sqrt{g}} < 0$ and that
$\rho(g,\floor{\sqrt{g}-1}-1,g-3) = g - \floor{\sqrt{g}}^2 + 1 > 0$.
Hence the Brill--Noether locus $\BN{g}{\ceil{\sqrt{g}-1}}{g-1}$ is
trivially contained in the Brill--Noether locus
$\BN{g}{\floor{\sqrt{g}-1}}{g-2}$, and the latter is expected maximal.
\end{proof}

Once a rank $r$ satisfying the conditions of \Cref{rbound} is fixed,
the degree $d$ that makes $\M^r_{g,d}$ expected maximal is uniquely
determined: it is the largest $d$ such that $\rho(g, r, d) < 0$, namely 
\begin{equation} \label{dmaxdef}
d = \dmax(g, r) \colonequals  r+\left\lceil \frac{gr}{r+1} \right\rceil -1.
\end{equation}
For ease of notation, for each $r$ satsifying \eqref{rcases}, we shall write $\M^r_g \colonequals
\M^r_{g, \dmax(g,r)}$ for the expected maximal Brill--Noether locus of
rank $r$ linear series. In other words, \Cref{rbound} says that the
expected maximal Brill--Noether loci in $\M_g$ are precisely the $\M^r_g$ for $r$ satisfying \eqref{rcases}.

\subsection{Brill--Noether theory of curves with fixed gonality}
Recall that the \emph{gonality} of a curve is the minimal $k$ such
that $C$ admits a $\g{1}{k}$.  The Brill--Noether locus $\BN{g}{1}{k}$
is the closure of the locus of $k$-gonal curves. Because the
corresponding Hurwitz space of degree $k$ covers is irreducible,
$\BN{g}{1}{k}$ is irreducible. It therefore makes sense to talk about
a general $k$-gonal curve.

In general, $W^r_d(C)$ can have multiple components of varying dimensions.
Pflueger~\cite{pflueger} showed that for a general $k$-gonal curve
\begin{equation} \label{rhok}
\dim W^r_d(C)\le \rho_k(g,r,d)\colonequals
\max_{\ell\in\{0,\dots,r^\prime\}} \rho(g,r-\ell,d)-\ell k,
\end{equation}
where $r^\prime \colonequals \min\{r, g-d+r-1\}$. 
Since $W^r_d(C)$ may
not have pure dimension, $\dim W^r_d(C)$ above means the maximum of
the dimensions of its components.
Subsequently, Jensen
and Ranganathan \cite{jensen_ranganathan} showed that a component of the maximum possible dimension
exists.
\begin{theorem}[Jensen--Ranganathan \cite{jensen_ranganathan}] \label{JR}
If $C$ is a general $k$-gonal curve, then $\dim W^r_d(C) = \rho_k(g, r, d)$. In particular, a general $k$-gonal curve admits a $g^r_d$ if and only if $\rho_k(g, r, d) \geq 0$.
\end{theorem}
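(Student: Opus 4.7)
The plan is to prove the two inequalities $\dim W^r_d(C)\leq \rho_k(g,r,d)$ and $\dim W^r_d(C)\geq \rho_k(g,r,d)$ separately. The upper bound is exactly Pflueger's bound \eqref{rhok}, already recalled in the excerpt, and is proven by tropicalization: specialize $C$ to a chain of elliptic curves with edge lengths (equivalently, torsion conditions at the nodes) carefully chosen so that a degree $k$ pencil exists on the skeleton, apply Baker's specialization lemma to bound $\dim W^r_d(C)$ above by the dimension of the corresponding Brill--Noether locus on the tropical curve, and then use chip-firing / displacement-tableaux combinatorics on the skeleton to show this tropical dimension is at most $\rho_k(g,r,d)$.

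For the lower bound, which is the substantive content of the theorem, I would use the pushforward description of line bundles under the gonality pencil. Fix a degree $k$ cover $\pi\colon C\to \PP^1$. For any $L\in \Pic^d(C)$, the pushforward $\pi_\ast L$ is a rank $k$ vector bundle on $\PP^1$ and therefore splits as $\bigoplus_{i=1}^k \calO(e_i)$ for some integers $e_1\geq \cdots \geq e_k$, with $\sum e_i = d-g-k+1$ by Riemann--Roch, and $h^0(C,L)=\sum_i \max(e_i+1,0)$. Thus $W^r_d(C)$ stratifies by splitting type $\vec e$, and the condition $h^0\geq r+1$ cuts out a union of splitting loci. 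For each admissible $\ell \in \{0,\dots,r'\}$, there is a natural splitting type realizing $h^0=r+1$ whose locus in $\Pic^d(C)$ has expected codimension equal to $-(\rho(g,r-\ell,d)-\ell k)$; taking the minimum over $\ell$ (equivalently, the maximum over $\ell$ of the dimension formula) produces the number $\rho_k(g,r,d)$.

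The remaining and main obstacle is to show that, for a \emph{general} $k$-gonal $C$, at least one of these splitting strata is nonempty and of the expected dimension. The strategy is degeneration: specialize $(C,\pi)$ to a chain of elliptic curves whose dual graph carries a tropical $g^1_k$ with the $k$-gonal torsion pattern, and exhibit an explicit limit linear series on the special fiber realizing the optimal splitting type $\vec e$ for the maximizing $\ell$. Then one smooths this limit linear series to $(C,\pi)$ and verifies, via a standard deformation-theoretic count on the Hurwitz space, that the resulting component of $\calW^r_d$ over the $k$-gonal locus is of the expected dimension $\rho_k(g,r,d)$. The technical heart of the argument lies in the explicit construction of the limit linear series with prescribed splitting behavior along the chain and in checking that smoothing preserves the dimension; this is precisely the input of \cite{jensen_ranganathan}.
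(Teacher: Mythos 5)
First, a point of order: the paper does not prove \Cref{JR} at all. It is quoted as an external input from \cite{jensen_ranganathan}, and the paper says explicitly that its applications ``rely only on the statement in \Cref{JR}.'' So your proposal can only be measured against the proofs in the literature, not against an argument in this paper.

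Your organization (Pflueger's upper bound \cite{pflueger} plus an existence lower bound) is the correct decomposition, and your pushforward numerology is right: $\pi_*L$ has rank $k$ and degree $d-g+1-k$, $h^0(C,L)=\sum_i \max(e_i+1,0)$, and maximizing the expected dimension of splitting strata with $h^0\ge r+1$ recovers $\rho_k(g,r,d)$. (Minor slip: the expected codimension in $\Pic^d(C)$ of the stratum attached to $\ell$ is $g-\left(\rho(g,r-\ell,d)-\ell k\right)$, not $-\left(\rho(g,r-\ell,d)-\ell k\right)$; your formula is off by $g$.) The genuine gap is in the lower bound, which you yourself call ``the substantive content of the theorem'': you describe a strategy --- degenerate, exhibit a limit linear series with the optimal splitting type, smooth it, count dimensions --- and then write that the technical heart ``is precisely the input of \cite{jensen_ranganathan}.'' That is circular: you cite the theorem's own source for the step that constitutes the theorem. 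Nothing in your sketch explains why the optimal splitting stratum is nonempty of the expected dimension for a general $k$-gonal curve, and that is the entire difficulty. Two further inaccuracies are worth flagging. First, the route you sketch (splitting strata plus limit linear series with prescribed splitting, smoothed over the Hurwitz space) is not Jensen--Ranganathan's argument; it is the approach of the refined Brill--Noether theory over Hurwitz spaces \cite{larson_refined_BN_Hurwitz,cook-powell_jensen,larson_larson_vogt_2020global}. Jensen--Ranganathan work tropically on the chain of loops and prove a lifting theorem for the relevant tropical divisor classes using logarithmic stable maps. Second, your description of the upper bound is internally inconsistent: Baker's specialization lemma applied to a chain of elliptic curves is useless, because the skeleton of a compact-type curve is a metric tree, on which every degree $d$ divisor class has rank $d$; Pflueger's argument requires the chain of $g$ loops with prescribed torsion, i.e.\ a totally degenerate special fiber. (Chains of elliptic curves with torsion conditions at the nodes do appear in this subject, e.g.\ in \cite{CHOI2022}, but via Eisenbud--Harris limit linear series, not via tropicalization and chip-firing.)
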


The dimensions and enumeration of all components of $W^r_d(C)$ were subsequently determined by the third author and others by studying associated splitting loci
\cite{Powell_jensen_fixed_gonality,cook-powell_jensen,jensen_ranganathan,larson_larson_vogt_2020global,larson_refined_BN_Hurwitz}. For a summary of these results, see \cite{jensen_payne_2021recent}. Our applications to maximal Brill--Noether loci will rely only on the statement in \Cref{JR}.

\section{The maximal gonality stratum in a Brill--Noether locus}
\label{sec: maximum gonality of BN loci}

Throughout the remainder of this paper, we add the assumption that $\rho<0$ for a Brill--Noether locus. Our main new ingredient is the following invariant of a Brill--Noether
locus.

\begin{defn}
For a given genus $g$, rank $r$, and degree $d$, we define
$\maxk(g,r,d)$ to be the maximal $k \geq 1$ such that a general curve
of genus $g$ and gonality $k$ admits a $\g{r}{d}$. In other words,
$\maxk(g,r,d)$ is the maximal $k$ such that $\M_{g,k}^1 \subseteq
\M^r_{g,d}$.
\end{defn}

Our basic observation is that $\maxk$ can separate Brill--Noether loci. 

\begin{prop}
\label{Theorem makx>maxk' implies non-containment}
If $\maxk(g,r,d)>\maxk(g,s, e)$, then $\BN{g}{r}{d}\nsubseteq \BN{g}{s}{e}$.
\end{prop}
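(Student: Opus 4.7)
The plan is to argue by contrapositive. Suppose the containment $\BN{g}{r}{d} \subseteq \BN{g}{s}{e}$ held; I will show it forces $\maxk(g,r,d) \leq \maxk(g,s,e)$. Set $k = \maxk(g,r,d)$. By the definition of $\maxk$, we have $\BN{g,k}{1}{} \subseteq \BN{g}{r}{d}$ (in the paper's shorthand, $\M^1_{g,k} \subseteq \M^r_{g,d}$). Composing with the assumed containment gives $\M^1_{g,k} \subseteq \M^s_{g,e}$, which is to say: a general curve of gonality $k$ admits a $\g{s}{e}$. But $\maxk(g,s,e)$ is defined to be the largest gonality for which this happens, so $\maxk(g,s,e) \geq k = \maxk(g,r,d)$, contradicting the hypothesis $\maxk(g,r,d) > \maxk(g,s,e)$.

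The only point that deserves a sentence is to check that ``a general $k$-gonal curve admits a $\g{s}{e}$'' really is equivalent to the closed containment $\M^1_{g,k} \subseteq \M^s_{g,e}$, so that the two formulations of $\maxk$ given in the definition agree and chain properly. One direction is immediate from the definition of a general point. For the converse, $\M^s_{g,e}$ is closed in $\M_g$ because it is the image of the proper map $\calW^s_e \to \M_g$, and $\M^1_{g,k}$ is irreducible by the remark preceding \Cref{JR}; hence $\M^1_{g,k} \cap \M^s_{g,e}$ is a closed subset of the irreducible variety $\M^1_{g,k}$, and if it contains a general point it must be all of $\M^1_{g,k}$.

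There is no real obstacle: the statement is essentially a tautological consequence of the definition of $\maxk$ together with irreducibility of the gonality stratum. The content of the proposition is not in its proof but in its use as a tool, as foreshadowed by the applications in the later sections. I would therefore keep the write-up to a short paragraph, invoking only the definition of $\maxk$, properness of $\calW^s_e \to \M_g$, and the irreducibility of $\M^1_{g,k}$ already recorded above.
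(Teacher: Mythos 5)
Your proof is correct and is essentially the paper's own argument: assume the containment, chain $\M^1_{g,k} \subseteq \M^r_{g,d} \subseteq \M^s_{g,e}$ for $k = \maxk(g,r,d)$, and derive a contradiction with the maximality defining $\maxk(g,s,e)$. Your additional paragraph checking that ``a general $k$-gonal curve admits a $\g{s}{e}$'' is equivalent to the containment $\M^1_{g,k} \subseteq \M^s_{g,e}$ (via closedness of Brill--Noether loci and irreducibility of $\M^1_{g,k}$) merely makes explicit the identification that the paper's definition of $\maxk$ already states with ``in other words.''
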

\begin{proof}
Assume, to get a contradiction, that $\BN{g}{r}{d}\subseteq
\BN{g}{s}{e}$, and let
$k=\maxk(g,r,d)>\maxk(g,s, e)$. By the definition of $\maxk$, $\BN{g}{1}{k}\nsubseteq\BN{g}{s}{e}$. But the
assumption implies that $\BN{g}{1}{k}\subseteq \BN{g}{r}{d}\subseteq
\BN{g}{s}{e}$, which is a contradiction.
\end{proof}

\begin{center}
\begin{tikzpicture}[x=0.75pt,y=0.75pt,yscale=-1,xscale=1]
	
	\draw  [color={rgb, 255:red, 140; green, 140; blue, 140 }  ,draw opacity=1 ][fill={rgb, 255:red, 140; green, 140; blue, 140 }  ,fill opacity=0.15 ] (96.28,110.17) .. controls (136.21,89.83) and (215.48,97.11) .. (273.33,126.42) .. controls (331.18,155.73) and (345.7,195.98) .. (305.77,216.32) .. controls (265.84,236.65) and (186.57,229.38) .. (128.72,200.07) .. controls (70.87,170.76) and (56.35,130.51) .. (96.28,110.17) -- cycle ;
	\draw  [color={rgb, 255:red, 208; green, 2; blue, 27 }  ,draw opacity=1 ][fill={rgb, 255:red, 208; green, 2; blue, 27 }  ,fill opacity=0.27 ][line width=1.5]  (98.42,72.56) .. controls (99.83,78.01) and (116.73,82.66) .. (136.18,82.96) .. controls (155.62,83.26) and (170.24,79.09) .. (168.83,73.64) .. controls (167.42,68.2) and (182.04,64.02) .. (201.48,64.32) .. controls (220.92,64.62) and (237.83,69.28) .. (239.24,74.72) -- (259.65,153.62) .. controls (258.24,148.17) and (241.34,143.51) .. (221.9,143.21) .. controls (202.45,142.91) and (187.84,147.09) .. (189.25,152.53) .. controls (190.65,157.98) and (176.04,162.15) .. (156.59,161.85) .. controls (137.15,161.56) and (120.25,156.9) .. (118.84,151.45) -- cycle ;
	\draw [color={rgb, 255:red, 84; green, 84; blue, 84 }  ,draw opacity=1 ][line width=3]    (118.82,151.45) .. controls (128.31,161.31) and (145.51,161.29) .. (156.59,161.85) .. controls (167.67,162.42) and (190.13,158.74) .. (189.24,152.53) .. controls (188.34,146.33) and (201.7,144.56) .. (212.57,143.28) .. controls (223.45,141.99) and (253.69,145.37) .. (259.66,153.62) ;
	\draw [color={rgb, 255:red, 0; green, 0; blue, 255 }  ,draw opacity=1 ][line width=1.5]    (111.78,172.91) .. controls (145.77,176.77) and (140.66,99.45) .. (150.63,100.76) .. controls (160.6,102.07) and (195.56,189.65) .. (207.7,169.04) .. controls (219.84,148.43) and (182.2,102.05) .. (211.34,93.03) .. controls (240.49,84.01) and (246.56,127.82) .. (269.63,93.03) ;
	\draw    (168,185.5) -- (182.79,166.09) ;
	\draw [shift={(184,164.5)}, rotate = 127.3] [color={rgb, 255:red, 0; green, 0; blue, 0 }  ][line width=0.75]    (10.93,-3.29) .. controls (6.95,-1.4) and (3.31,-0.3) .. (0,0) .. controls (3.31,0.3) and (6.95,1.4) .. (10.93,3.29)   ;
	\draw    (217,182.5) -- (233.06,152.27) ;
	\draw [shift={(234,150.5)}, rotate = 117.98] [color={rgb, 255:red, 0; green, 0; blue, 0 }  ][line width=0.75]    (10.93,-3.29) .. controls (6.95,-1.4) and (3.31,-0.3) .. (0,0) .. controls (3.31,0.3) and (6.95,1.4) .. (10.93,3.29)   ;
	\draw  [fill={rgb, 255:red, 0; green, 0; blue, 0 }  ,fill opacity=1 ] (182.95,156.76) .. controls (182.95,154.27) and (184.85,152.25) .. (187.2,152.25) .. controls (189.55,152.25) and (191.45,154.27) .. (191.45,156.76) .. controls (191.45,159.25) and (189.55,161.27) .. (187.2,161.27) .. controls (184.85,161.27) and (182.95,159.25) .. (182.95,156.76) -- cycle ;
	
	\draw (71.46,20.97) node [anchor=north west][inner sep=0.75pt]   [align=left] {\textcolor[rgb]{0.82,0.01,0.11}{$\displaystyle \kappa $}\textcolor[rgb]{0.82,0.01,0.11}{(}\textcolor[rgb]{0.82,0.01,0.11}{$\displaystyle g,r,d$}\textcolor[rgb]{0.82,0.01,0.11}{)}$\displaystyle =k >\textcolor[rgb]{0,0,1}{\operatorname{\kappa }}\textcolor[rgb]{0,0,1}{(}\textcolor[rgb]{0,0,1}{g,s,e}\textcolor[rgb]{0,0,1}{)} =k-1$};
	\draw (137,185.4) node [anchor=north west][inner sep=0.75pt]  [font=\small]  {$\mathcal{M}_{g,k-1}^{1} \subset \mathcal{\textcolor[rgb]{0.33,0.33,0.33}{M}}\textcolor[rgb]{0.33,0.33,0.33}{_{g,k}^{1}} \subset \mathcal{\textcolor[rgb]{0.49,0.49,0.49}{M}}\textcolor[rgb]{0.49,0.49,0.49}{_{g,k+1}^{1}}$};
	\draw (60.31,71.65) node [anchor=north west][inner sep=0.75pt]    {$\mathcal{\textcolor[rgb]{0.82,0.01,0.11}{M}}\textcolor[rgb]{0.82,0.01,0.11}{_{g,d}^{r}}$};
	\draw (272.24,73.7) node [anchor=north west][inner sep=0.75pt]    {$\mathcal{\textcolor[rgb]{0,0,1}{M}}\textcolor[rgb]{0,0,1}{_{g,e}^{s}}$};

\end{tikzpicture}
\\
A general curve of gonality $k$ is contained in \textcolor{red}{$\BN{g}{r}{d}$}, but not in \textcolor{blue}{$\BN{g}{s}{e}$}.
\end{center}

\begin{remark}
Noting the trivial containments of Brill--Noether loci, if
$\maxk(g,r,d)>\maxk(g,s, e)$ then \Cref{Theorem makx>maxk' implies non-containment} in fact implies non-containments of Brill--Noether loci  of the form $\BN{g}{r}{d}\nsubseteq \BN{g}{s}{a}$ for all $a\le e$ and of the form $\BN{g}{r}{d}\nsubseteq\BN{g}{s+i}{e+i}$ for all $i\ge 1$.
\end{remark}

By \Cref{JR},
a general curve of gonality $k$ admits a $g^r_d$ if and only if $\rho_k(g, r, d) \geq 0$, so
\begin{equation} \label{maxk1}
\maxk(g,r,d) = \max\{k \; : \; \rho_k(g,r,d) \geq 0 \}.
\end{equation}
We remark that if $\M^r_{g,d}$ is non-empty, then $d - 2r \geq 0$ by
Clifford's theorem, from which we can deduce the following bound.

\begin{lemma}
Let $g,r,d \geq 1$ satisfy $d - 2r \geq d$ and $g-d+r\ge 1$.  Then $\maxk(g,r,d)
\geq 2$. 
\end{lemma}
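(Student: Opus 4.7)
The plan is to invoke the equality \eqref{maxk1}, which reduces establishing $\maxk(g,r,d) \geq 2$ to showing $\rho_2(g,r,d) \geq 0$. By the definition \eqref{rhok} of $\rho_k$, it is enough to exhibit a single index $\ell$ in the admissible range $\{0,\ldots,r'\}$, with $r' = \min\{r,\, g-d+r-1\}$, for which $\rho(g,r-\ell,d) - 2\ell \geq 0$. My candidate is the extremal choice $\ell = r'$; the secondary hypothesis $g-d+r \geq 1$ is precisely what guarantees $r' \geq 0$, so this $\ell$ is admissible.

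I would then split into two cases according to which term attains the minimum in $r'$. When $g \geq d+1$, one has $r' = r$, so $r - \ell = 0$, and $\rho(g,0,d) - 2r$ collapses immediately to $d - 2r$. When $g \leq d$, one has $r' = g-d+r-1$ and therefore $r - \ell = d-g+1$; a short expansion of $\rho(g, d-g+1, d) - 2(g-d+r-1)$ telescopes to $d - 2r$ as well. In either branch, the Clifford-type hypothesis gives $\rho_2(g,r,d) \geq d - 2r \geq 0$, and \eqref{maxk1} delivers $\maxk(g,r,d) \geq 2$.

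There is no genuine obstacle here: the whole argument is driven by the pleasant observation that the extremal choice $\ell = r'$ produces the Clifford quantity $\gamma(r,d) = d - 2r$ on both sides of the split, so the splitting is only a formal nuisance rather than a real case distinction. The hypothesis $g - d + r \geq 1$ plays no role beyond certifying that $\ell = r'$ lies in the allowed range, and no further subtlety enters the computation.
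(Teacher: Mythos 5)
Your proof is correct and is essentially the paper's own argument: the paper likewise sets $\ell=\min\{r,g-d+r-1\}$ and $k=2$ in Pflueger's formula \eqref{rhok} to obtain $\rho_2(g,r,d)\geq d-2r\geq 0$ and then concludes via \eqref{maxk1}; your two-case computation merely writes out explicitly the telescoping that the paper leaves implicit. (The paper also records a one-line geometric alternative: if $C$ has a $\g{1}{2}$, then $r\,\g{1}{2}+(d-2r)p$ is a $\g{r}{d}$, so $\M^1_{g,2}$ lies in every non-empty $\M^r_{g,d}$.)
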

\begin{proof}
If we show that $\M^1_{g,2}$ is contained in every non-empty
Brill--Noether locus $\M^r_{g,d}$ then it follows that $\maxk(g, r, d)
\geq 2$.  To this end, if $C$ has a $g^1_2$, then for any $p \in C$ we
have that $r g^1_2 + (d - 2r)p$ is a $g^r_d$ on $C$.

We can also argue directly with Pflueger's formula \eqref{rhok} by
setting $\ell=\min\{r,g-d+r-1\}$ and $k=2$, from which we obtain
$\rho_2(g,r,d) \geq d-2r\ge 0$.   Then  $\maxk(g, r, d)
\geq 2$ by equation \eqref{maxk1}. 
\end{proof}

Despite the combinatorial nature of \eqref{rhok} and \eqref{maxk1},
we have the following
closed formula.

\begin{prop}\label{prop formula for maxk}
	Suppose $d\le g-1$. We have\[\maxk(g,r,d)=\begin{cases}
		\floor{\frac{d}{r}} & \text{ if } g+1 > \floor{\frac{d}{r}}+d \\
		g+1-\gamma(r,d)+\floor{-2\sqrt{-\rho(g,r,d)}} & \text{ else. }
	\end{cases}\]
\end{prop}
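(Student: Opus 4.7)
The plan is to combine \Cref{JR} with Pflueger's formula \eqref{rhok} to rewrite $\maxk$ as a discrete optimum and compute it explicitly. Since $d\le g-1$ forces $g-d+r-1\ge r$, one has $r'=r$ in \eqref{rhok}, so $\rho_k(g,r,d)=\max_{0\le\ell\le r}[\rho(g,r-\ell,d)-\ell k]$. The standing assumption $\rho(g,r,d)<0$ rules out the $\ell=0$ term, and for $\ell\ge 1$ the inequality $\rho(g,r-\ell,d)\ge\ell k$ is equivalent to $k\le \rho(g,r-\ell,d)/\ell$. Together with \eqref{maxk1}, this gives
\[
\maxk(g,r,d)=\max_{1\le\ell\le r}\left\lfloor \frac{\rho(g,r-\ell,d)}{\ell}\right\rfloor.
\]

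Introduce the abbreviations $a=r+1$, $b=g-d+r$, and $c=ab-g=-\rho(g,r,d)>0$. The identity $\rho(g,r-\ell,d)=-c+(a+b)\ell-\ell^2$ shows that the real function $h(\ell):=\rho(g,r-\ell,d)/\ell=(a+b)-\ell-c/\ell$ is concave on $(0,\infty)$, attaining its unique maximum at $\ell=\sqrt{c}$ with value $(a+b)-2\sqrt{c}=g+1-\gamma(r,d)-2\sqrt{-\rho(g,r,d)}$. A short calculation identifies $\sqrt{c}\ge r$ with $r(g+1-d)\ge d$, which for integer $g-d$ matches the first case's hypothesis $g+1>\lfloor d/r\rfloor+d$ except at the single boundary $r\mid d$ with $c=r^2$, where both formulas are seen to give $d/r$. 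When $\sqrt{c}\ge r$, $h$ is monotone nondecreasing on $[1,r]$, so $\maxk=\lfloor h(r)\rfloor=\lfloor d/r\rfloor$ (using $h(r)=\rho(g,0,d)/r=d/r$).

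When $\sqrt{c}<r$, both $\lfloor\sqrt{c}\rfloor$ and $\lceil\sqrt{c}\rceil$ lie in $\{1,\dots,r\}$, and the task becomes showing
\[
\max_{1\le\ell\le r}\lfloor h(\ell)\rfloor=(a+b)+\lfloor -2\sqrt{c}\rfloor.
\]
Since $a+b\in\mathbb{Z}$, $\lfloor h(\ell)\rfloor=(a+b)-\lceil\ell+c/\ell\rceil$, so this reduces to $\min_{\ell\in\mathbb{Z}_{>0}}\lceil\ell+c/\ell\rceil=\lceil 2\sqrt{c}\rceil$. The lower bound is AM--GM; for the matching upper bound, write $c=m^2+t$ with $m=\lfloor\sqrt{c}\rfloor$ and $0\le t\le 2m$, and choose $\ell=m$ when $t\le m$ (giving $\ell+c/\ell=2m+t/m\le 2m+1$) or $\ell=m+1$ when $t>m$ (giving $\ell+c/\ell=2m+(t+1)/(m+1)<2m+2$); in each subcase the chosen $\ell$ achieves the corresponding value of $\lceil 2\sqrt{c}\rceil\in\{2m,2m+1,2m+2\}$. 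I expect this integer-vs-continuous comparison to be the main obstacle: in general $\max\lfloor h\rfloor\ne\lfloor\max h\rfloor$ for a concave function on the integer lattice, and equality here relies on the explicit form $h(\ell)=A-\ell-c/\ell$ with integer $A$, together with the elementary case split on $t=c-\lfloor\sqrt{c}\rfloor^2$.
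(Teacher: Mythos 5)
Your proof is correct, and it takes a genuinely different route from the paper's. Both arguments begin from \Cref{JR} and \eqref{maxk1} with the observation that $d\le g-1$ gives $r'=r$, but you swap the order of optimization: for each fixed $\ell\ge 1$ you solve the constraint for $k$, obtaining $\maxk(g,r,d)=\max_{1\le\ell\le r}\floor{\rho(g,r-\ell,d)/\ell}$, and then maximize the concave function $h(\ell)=(a+b)-\ell-c/\ell$ over integer $\ell$ (in your notation $c=-\rho(g,r,d)$ and $a+b=g+1-\gamma(r,d)$). The paper does the opposite: for each fixed $k$ it locates the maximizing index $\ell=\ceil{\ell^\ast}$ at the vertex $\ell^\ast=\frac{g-k-\gamma(r,d)+1}{2}$ of the parabola $\rho(g,r,d)+(g-k-\gamma(r,d)+1)\ell-\ell^2$, and then solves $\rho_k(g,r,d)\ge 0$ for $k$, using that $\rho_k$ is non-increasing in $k$. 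The integrality obstacle you correctly isolate --- that $\max_\ell\floor{h(\ell)}$ need not equal $\floor{\max h}$ for a concave function --- is exactly the issue the paper meets in the guise of the dichotomy ``$\ell^\ast$ integer versus half-integer''; the paper disposes of it with \Cref{lemma -1/4 doesn't matter in maxk bound} (the mod-$4$ fact $\floor{-2\sqrt{n}}=\floor{-2\sqrt{n-\tfrac{1}{4}}}$), whereas you dispose of it with the constructive lemma $\min_{\ell\in\Z_{>0}}\ceil{\ell+c/\ell}=\ceil{2\sqrt{c}}$, proved by AM--GM together with the choices $\ell=\floor{\sqrt{c}}$ or $\ell=\ceil{\sqrt{c}}$ according to the size of $t=c-\floor{\sqrt{c}}^2$; your three subcases for $t$ check out. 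Your route is self-contained and has the side benefit of exhibiting which $\ell$ (i.e., which rank drop) attains the optimum; the price is that your case division $\sqrt{c}\ge r$ versus $\sqrt{c}<r$ does not coincide verbatim with the proposition's cases, forcing the boundary reconciliation at $c=r^2$ with $r\mid d$ --- which you carry out correctly, both formulas giving $d/r$ there --- while the paper's case analysis tracks the statement of the proposition directly.
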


\begin{proof}
Note that $d\le g-1$ is equivalent to $r=\min \{r, g-d+r-1\}$, hence $r=r^\prime$. For fixed $g,r,d$, we observe that $\rho_k(g,r,d)$ is a non-increasing function of $k$, as can be seen by writing 
\[
\rho_k(g,r,d)=\max_{\ell\in\{0,\dots,r\}} \rho(g,r-\ell,d)-\ell k = \max_{\ell\in\{0,\dots,r\}}\rho(g,r,d)+\left(g-k-\gamma(r,d)+1\right)\ell -\ell^2.
\] 
In particular, $\rho_k(g,r,d)$ is a maximum over values of a concave down parabola. The maximum of this parabola (over all real values of $\ell$) is attained at
\[
\ell^\ast\colonequals\frac{g-k-\gamma(r,d)+1}{2}.
\]
Thus the maximum of the parabola over our range of integers occurs at $\ell = \lceil \ell^\ast \rceil$ if $0\le \ell^\ast \le r$. Otherwise the maximum of is attained at $\ell=0$ (if $\ell^* < 0$) or at $\ell=r$ (if $\ell^* > r$).

We now treat each of the two cases in the statement. First suppose $g + 1 > \lfloor \frac{d}{r} \rfloor + d$. If $k = \lfloor \frac{d}{r} \rfloor$, then $k < g + 1 - d$, so $\ell^* > r$ and one checks $\rho_k(g, r, d) = \rho(g, 0, d) - rk = d - rk \geq 0$. Meanwhile, if $k = \lfloor \frac{d}{r} \rfloor +1$, then $k \leq g + 1 - d$, so $\ell^* \geq r$. Hence, $\rho_k(g, r, d) = \rho(g, 0, d) -rk = d - rk < 0$. Since $\rho_k(g, r, d)$ is non-increasing, it follows that $\maxk(g, r, d) = \lfloor \frac{d}{r} \rfloor$.

Now suppose $g + 1 \le \lfloor \frac{d}{r} \rfloor + d$. In this case, we can bound
\begin{align*}
-\rho(g, r, d) &= (r+1)(g - d + r) - g =r(g-d)-d+r^2+r\\
&\leq r\left(\floor{\frac{d}{r}}-1\right)-d+r^2+r = r\floor{\frac{d}{r}}-d+r^2\\
&= d-(d\bmod r) -d+r^2 \le r^2
\end{align*}
where $(d\bmod r)$ denotes the remainder after dividing $d$ by $r$ and
where we remark the identity  $r\floor{\frac{d}{r}} = d-(d\bmod r)$.
Thus $\sqrt{-\rho(g, r, d)} \leq  r$ and it follows that the claimed value for $\maxk(g, r, d)$ lies in the range $k \geq g + 1 - d$. If $k \geq g + 1 - d$, then
$\ell^* \leq r$, so 
\[\rho_k(g, r, d) = \rho(g, r, d) + 2\ell^*\lceil \ell^* \rceil - \lceil \ell^* \rceil^2.\]
If $\ell^*$ is an integer then $\rho_k(g, r, d) \geq 0$ is equivalent to $(\ell^*)^2 \geq - \rho(g, r, d)$, which in turn is equivalent to
\begin{equation*}
k\le g+1-\gamma(r,d)-2\sqrt{-\rho(g,r,d)}.
\end{equation*}
Otherwise $\lceil \ell^* \rceil = \ell^* + \frac{1}{2}$, so $\rho_k(g, r, d) \geq 0$ is equivalent to
\[\rho(g, r, d) + 2\ell^*(\ell^* + \tfrac{1}{2}) - (\ell^* + \tfrac{1}{2})^2 \geq 0\]
which in turn is equivalent to $(\ell^*)^2 \geq - \rho(g, r, d) - \tfrac{1}{4}$. In this case, we obtain the bound
\[k \leq  g+1-\gamma(r,d)-2\sqrt{-\rho(g,r,d)-\tfrac{1}{4}}. \] The result now follows from \Cref{lemma -1/4 doesn't matter in maxk bound} below.
\end{proof}

\begin{lemma}\label{lemma -1/4 doesn't matter in maxk bound}
	For any integer $n> 0$, we have $\floor{-2\sqrt{n}}=\floor{-2\sqrt{n-\frac{1}{4}}}$.
\end{lemma}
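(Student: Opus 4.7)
The plan is to reduce the statement to a simple modular arithmetic obstruction. Since $n - \tfrac{1}{4} \leq n$, we have $-2\sqrt{n} \leq -2\sqrt{n - \tfrac{1}{4}}$, so always $\lfloor -2\sqrt{n} \rfloor \leq \lfloor -2\sqrt{n - \tfrac{1}{4}} \rfloor$. The two floors differ precisely when some integer $m$ satisfies
\[
-2\sqrt{n} < m \leq -2\sqrt{n - \tfrac{1}{4}}.
\]
Since $n \geq 1$, both endpoints are negative, so any such $m$ is of the form $m = -k$ for a positive integer $k$, and the condition becomes $2\sqrt{n - \tfrac{1}{4}} \leq k < 2\sqrt{n}$.

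Next I would square both sides (they are positive) to obtain the equivalent condition $4n - 1 \leq k^2 < 4n$. Since $k^2$ is an integer sandwiched between two consecutive integers, the only possibility is $k^2 = 4n - 1$.

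The main (and only) content of the lemma is now the observation that $k^2 = 4n - 1$ has no integer solution: reducing mod $4$, the right-hand side is congruent to $3$, whereas every perfect square is congruent to $0$ or $1$ modulo $4$. This contradiction shows that no integer $m$ lies in the half-open interval above, and therefore $\lfloor -2\sqrt{n} \rfloor = \lfloor -2\sqrt{n - \tfrac{1}{4}} \rfloor$. There is no real obstacle here; the only subtlety is making sure the right endpoint is included and the left excluded when characterizing when two floors differ, which is why the squaring step yields a closed interval on the left and open on the right.
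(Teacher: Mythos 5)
Your proof is correct and is essentially the same as the paper's: the paper phrases the reduction via ceilings (using $\lfloor -x \rfloor = -\lceil x \rceil$), supposes the ceilings differ to get an integer $m$ with $2\sqrt{n-\tfrac{1}{4}} \le m < 2\sqrt{n}$, squares to force $m^2 = 4n-1$, and concludes by the same observation that squares are $0$ or $1 \bmod 4$. Your handling of the strict/non-strict endpoints matches the paper's, so there is nothing to change.
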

\begin{proof}
We see that $\ceil{2\sqrt{n-\frac{1}{4}}}\le
\ceil{2\sqrt{n}}$. Suppose they are not equal. Then there is an
$m>0$ such that $2\sqrt{n-\frac{1}{4}}\le m <
2\sqrt{n}$. Squaring the inequalities gives $4n-1\le m^2<4n$, whereby
$m^2=4n-1$. However, since $m^2\equiv0,1\bmod 4$, we arrive at a contradiction.
\end{proof}

\subsection{Genus \texorpdfstring{$20$ and $21$}{}}
\label{subsec:20}

Using \Cref{Theorem makx>maxk' implies non-containment}, we prove
\Cref{Conj Max BN loci} in genus $20$ and reduce the genus $21$ case
to a single non-containment.

In genus $20$, the expected maximal Brill--Noether loci are $\BN{20}{1}{10}$, $\BN{20}{2}{15}$, $\BN{20}{3}{17}$, $\BN{20}{4}{19}$.

\begin{theorem}
	The Maximal Brill--Noether Loci Conjecture, \Cref{Conj Max BN loci}, holds in genus $20$.
\end{theorem}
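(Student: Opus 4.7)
The plan is to verify all twelve pairwise non-containments among the four expected maximal Brill--Noether loci in genus $20$, namely $X_1 \colonequals \BN{20}{1}{10}$, $X_2 \colonequals \BN{20}{2}{15}$, $X_3 \colonequals \BN{20}{3}{17}$, and $X_4 \colonequals \BN{20}{4}{19}$. First, I apply \Cref{prop formula for maxk} to compute
\[
\maxk(20,1,10)=10,\qquad \maxk(20,2,15)=8,\qquad \maxk(20,3,17)=6,\qquad \maxk(20,4,19)=5.
\]
Since these values are strictly decreasing in $r$, \Cref{Theorem makx>maxk' implies non-containment} immediately yields the six ``upward'' non-containments $X_r\nsubseteq X_s$ whenever $r<s$: a general curve of gonality $\maxk(20,r,\dmax(20,r))$ lies in $X_r$ but not in $X_s$.

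For the six remaining ``downward'' non-containments $X_s\nsubseteq X_r$ with $s>r$, the plan is to combine Steffen's codimension bound with the dimension estimates on gonality strata coming from \Cref{JR}. Using the dimensions $\dim X_1=2g+2\cdot 10-5=55$, $\dim X_2=3g-4=56$, $\dim X_3=3g-3+\rho(20,3,17)=53$, and $\dim X_4=3g-3+\rho(20,4,19)=52$, the case $X_2\nsubseteq X_1$ is immediate from $56>55$. For $X_3\nsubseteq X_1$ and $X_4\nsubseteq X_1$, a direct computation with Pflueger's formula gives $\rho_{10}(20,3,17)=-4$ and $\rho_{10}(20,4,19)=-5$, so the refined Brill--Noether bound on the Brill--Noether locus restricted to $\M^1_{20,10}$ yields $\dim(X_3\cap X_1)\le 51<53$ and $\dim(X_4\cap X_1)\le 50<52$. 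Hence each of $X_3$ and $X_4$ has a component meeting the complement of $X_1$ in $\M_{20}$ and is therefore not contained in $X_1$.

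The main obstacle is the three remaining non-containments $X_3\nsubseteq X_2$, $X_4\nsubseteq X_2$, and $X_4\nsubseteq X_3$, where a pure dimension comparison in $\M_{20}$ fails since $\dim X_s<\dim X_r$ in each case. Here the plan is to restrict to the generic $11$-gonal stratum, where by \Cref{JR} the top-dimensional component of $X_s$ must concentrate (since $\rho_{11}(20,s,\dmax(20,s))=\rho(20,s,\dmax(20,s))$), and show that its general point does not admit the corresponding lower-rank linear series. Dimension-theoretically, the expected dimension of $X_s\cap X_r$ equals $3g-3+\rho(20,s,\dmax(20,s))+\rho(20,r,\dmax(20,r))$, giving $52$, $51$, and $48$ in the three cases respectively---each strictly less than the corresponding $\dim X_s$. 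Making this rigorous will likely require either a transversality or excess-intersection argument for the two independent Brill--Noether conditions on the generic gonality stratum, or an explicit construction (for instance via Mukai-style embeddings of genus $20$ curves into K3 surfaces as in \cite{auel_haburcak_2022}) producing a curve of gonality $11$ carrying a $g^s_{\dmax(20,s)}$ but not a $g^r_{\dmax(20,r)}$.
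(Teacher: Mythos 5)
Your $\maxk$ computations are correct: \Cref{prop formula for maxk} indeed gives $\maxk(20,1,10)=10$, $\maxk(20,2,15)=8$, $\maxk(20,3,17)=6$, $\maxk(20,4,19)=5$, and \Cref{Theorem makx>maxk' implies non-containment} then yields the six ``upward'' non-containments, while $\dim\BN{20}{2}{15}=56>55=\dim\BN{20}{1}{10}$ settles $\BN{20}{2}{15}\nsubseteq\BN{20}{1}{10}$. Crucially, among your upward cases is $\BN{20}{3}{17}\nsubseteq\BN{20}{4}{19}$, and this is the \emph{only} non-containment the paper actually has to prove: in \cite{auel_haburcak_2022} the genus-$20$ conjecture was already reduced to exactly this statement, all remaining non-containments having been established there by K3 surface methods. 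The paper's proof is just that citation plus your computation $\maxk(20,3,17)=6>5=\maxk(20,4,19)$. Your proposal instead tries to reprove the other eleven non-containments from scratch, and that is where it breaks down.

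The genuine gaps are in the ``downward'' cases. For $\BN{20}{3}{17}\nsubseteq\BN{20}{1}{10}$ and $\BN{20}{4}{19}\nsubseteq\BN{20}{1}{10}$ you infer from $\rho_{10}(20,3,17)=-4$ that $\dim\bigl(\BN{20}{3}{17}\cap\BN{20}{1}{10}\bigr)\le 55+\rho_{10}(20,3,17)=51$, but no result in the paper supports this: Pflueger's bound \eqref{rhok} and \Cref{JR} control $\dim W^r_d(C)$ only for a \emph{general} $k$-gonal curve $C$, so $\rho_{10}<0$ tells you merely that the curves of gonality $\le 10$ admitting a $g^3_{17}$ form a proper closed subset of the $55$-dimensional irreducible locus $\BN{20}{1}{10}$, i.e.\ have dimension $\le 54$. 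Converting $\rho_k<0$ into codimension $-\rho_k$ \emph{inside} the gonality stratum would require a Steffen-type theorem relative to the Hurwitz space, which is not available here. Moreover \cite{steffen_1998} gives only $\dim\BN{20}{3}{17}\ge 53$ and $\dim\BN{20}{4}{19}\ge 52$ (the paper asserts codimension exactly $-\rho$ only for $-3\le\rho\le-1$, so your equalities $\dim=53,52$ are also unjustified), and $54\ge 53$ produces no contradiction; the argument therefore fails. Finally, the three cases $\BN{20}{3}{17}\nsubseteq\BN{20}{2}{15}$, $\BN{20}{4}{19}\nsubseteq\BN{20}{2}{15}$, $\BN{20}{4}{19}\nsubseteq\BN{20}{3}{17}$ are left as an acknowledged sketch (``transversality or excess-intersection argument,'' or an unspecified K3 construction), so they are not proved at all. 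The shortest correct completion is the paper's route: invoke the reduction of \cite{auel_haburcak_2022} and conclude with your computation $\maxk(20,3,17)>\maxk(20,4,19)$.
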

\begin{proof}
	In \cite{auel_haburcak_2022}, \Cref{Conj Max BN loci} for $g=20$ was reduced to proving $\BN{20}{3}{17}\nsubseteq\BN{20}{4}{19}$. We compute \[\maxk(20,3 , 17)= 6> 5 =\maxk(20,4 , 19),\] whereby \Cref{Theorem makx>maxk' implies non-containment} gives the desired non-containment.
\end{proof}

In genus $21$, the expected maximal Brill--Noether loci are $\BN{21}{1}{11}$, $\BN{21}{2}{15}$, $\BN{21}{3}{18}$, $\BN{21}{4}{20}$. We summarize the known non-containments without proof, as they follow directly from \cite{auel_haburcak_2022} and \Cref{Theorem makx>maxk' implies non-containment}.

\begin{theorem}
	In genus $21$, the loci $\BN{21}{1}{11}$, $\BN{21}{2}{15}$, $\BN{21}{4}{20}$ are maximal. There are also non-containments
	\begin{itemize}
		\item $\BN{21}{3}{18}\nsubseteq\BN{21}{1}{11}$ and
		\item $\BN{21}{3}{18}\nsubseteq\BN{21}{2}{15}$.
	\end{itemize}
\end{theorem}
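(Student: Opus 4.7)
My approach is to combine the $\kappa$-based criterion of \Cref{Theorem makx>maxk' implies non-containment} with the K3-surface non-containments already established in \cite{auel_haburcak_2022}.

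The first step is to tabulate $\kappa(21,r,d)$ for the four expected maximal loci using the closed formula of \Cref{prop formula for maxk}. Since $g+1 = 22$, while $\lfloor d/r\rfloor+d$ equals $22, 22, 24, 25$ in the four cases, the first branch of the formula never applies and the second branch is used throughout. With $\rho = -1, -3, -3, -4$ and Clifford indices $9, 11, 12, 12$ respectively, a short direct computation yields
\[
\kappa(21,1,11) = 11, \quad \kappa(21,2,15) = 7, \quad \kappa(21,3,18) = 6, \quad \kappa(21,4,20) = 6.
\]

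I would then apply \Cref{Theorem makx>maxk' implies non-containment} to each ordered pair with strictly decreasing $\kappa$-value, obtaining the five non-containments $\BN{21}{1}{11}\nsubseteq\BN{21}{2}{15},\BN{21}{3}{18},\BN{21}{4}{20}$ and $\BN{21}{2}{15}\nsubseteq\BN{21}{3}{18},\BN{21}{4}{20}$. These establish the maximality of $\BN{21}{1}{11}$ (which is also recovered from \cite[Prop.~1.6]{auel_haburcak_2022}) and two of the three non-containments needed for maximality of $\BN{21}{2}{15}$. The remaining six non-containments in the statement—namely $\BN{21}{2}{15}\nsubseteq\BN{21}{1}{11}$, the three non-containments $\BN{21}{4}{20}\nsubseteq\BN{21}{1}{11},\BN{21}{2}{15},\BN{21}{3}{18}$ establishing maximality of $\BN{21}{4}{20}$, and the two stated non-containments $\BN{21}{3}{18}\nsubseteq\BN{21}{1}{11},\BN{21}{2}{15}$—are invisible to the $\kappa$ invariant and I would cite them from \cite{auel_haburcak_2022}, where they are established by exhibiting smooth genus $21$ curves on K3 surfaces with carefully chosen Picard lattices so that the required linear series lifts from the surface while the excluded one is ruled out by a lattice-theoretic Clifford/gonality bound.

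The main obstacle, and the reason the genus $21$ case is not fully settled here, is the pair $\BN{21}{3}{18}$ versus $\BN{21}{4}{20}$ conspicuously absent from the statement: since $\kappa(21,3,18) = \kappa(21,4,20) = 6$, \Cref{Theorem makx>maxk' implies non-containment} cannot separate them, and the K3 techniques of \cite{auel_haburcak_2022} are likewise inadequate for this pair. Resolving $\BN{21}{3}{18}\nsubseteq\BN{21}{4}{20}$ appears to require either a finer stratification of $\M_{g,k}^1$ by splitting type (as in the works surveyed at the end of \Cref{sec: background}) or a new K3 construction tailored to this specific comparison.
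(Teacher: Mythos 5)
Your proposal is correct and takes essentially the same route as the paper, whose proof is omitted with the remark that the statements ``follow directly from \cite{auel_haburcak_2022} and \Cref{Theorem makx>maxk' implies non-containment}'': your computed values $\maxk(21,1,11)=11$, $\maxk(21,2,15)=7$, $\maxk(21,3,18)=\maxk(21,4,20)=6$ are correct, and your division of labor (the five non-containments with strictly larger $\maxk$ from \Cref{Theorem makx>maxk' implies non-containment}, the remaining six from the K3-surface results of \cite{auel_haburcak_2022}) is exactly the intended combination. Your closing observation that the equality $\maxk(21,3,18)=\maxk(21,4,20)$ leaves only $\BN{21}{3}{18}\nsubseteq\BN{21}{4}{20}$ unresolved matches the paper's subsequent remark.
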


\begin{remark}
	To verify that \Cref{Conj Max BN loci} holds in genus $21$ the only remaining non-containment is $\BN{21}{3}{18}\nsubseteq \BN{21}{4}{20}$
\end{remark}

\section{Applications to Brill--Noether loci}\label{sec: distinguishing general BN loci}

We apply \Cref{Theorem makx>maxk' implies non-containment} to prove new
non-containments between Brill--Noether loci.
We first collect a few observations about $\rho$ and $\gamma$ for
Brill--Noether loci. As $\maxk$ depends explicitly on $\gamma$ and
$\rho$, it is natural to ask if $\gamma$ and $\rho$ are sufficient to
numerically identify a $\g{r}{d}$.

\begin{prop}
Let $g,r,d,s, e$ be positive integers. If $\rho(g,r,d)=\rho(g,s, e)$ and $\gamma(r,d)=\gamma(s, e)$, then either
\begin{enumerate}[label=(\roman*)]
	\item $r=s$ and $d=e$, or
	\item $s = g-d+r-1$ and $e = 2g-2-d$.
\end{enumerate}

\end{prop}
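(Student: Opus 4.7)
The plan is to translate the two hypotheses into algebraic equations and exploit the fact that one is linear and the other quadratic in the parameters. The hypothesis $\gamma(r,d)=\gamma(s,e)$ gives the linear relation $d-e = 2(r-s)$. The hypothesis $\rho(g,r,d)=\rho(g,s,e)$, after the $g$'s cancel, becomes
\[
(r+1)(g-d+r) = (s+1)(g-e+s).
\]
Introduce the single parameter $t \colonequals r-s$, so that the Clifford condition forces $d = e+2t$. Substituting $r=s+t$ and $d = e+2t$ into the $\rho$ equation reduces everything to an identity in the variables $s,t,e,g$.

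After substitution, write $b \colonequals g - e + s$ so that $g - d + r = b - t$. The $\rho$ equation becomes $(s+t+1)(b-t) = (s+1)b$, which rearranges to
\[
t\bigl(b - t - s - 1\bigr) \;=\; 0.
\]
This is the key factorization: the problem splits cleanly into two cases according to which factor vanishes.

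If $t=0$, then $r=s$ and therefore $d=e$, giving conclusion (i). If instead $b = t+s+1$, then $g - e + s = (r-s) + s + 1$, so $e = g - (r-s) - 1$, and consequently $d = e + 2t = g + (r-s) - 1$. A direct check then confirms $g - d + r - 1 = s$ and $2g - 2 - d = e$, which is exactly the Serre duality case (ii).

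There is no genuine obstacle here; the entire argument is one substitution followed by a factorization of the constraint. The only thing to mind is the bookkeeping, namely that the Clifford equation collapses the two-parameter ambiguity down to a single variable $t$, at which point the $\rho$ relation becomes a product of two linear factors in $t$ whose vanishing corresponds bijectively to the two conclusions.
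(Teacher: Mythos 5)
Your proof is correct and takes essentially the same route as the paper: both substitute the Clifford-index relation ($d-e=2(r-s)$) into the $\rho$ equation and factor the resulting expression as a product of two linear terms in the difference parameter, whose vanishing gives cases (i) and (ii) respectively. The only differences are cosmetic (your $t=r-s$ versus the paper's $\delta=s-r$, and your auxiliary variable $b$), so there is nothing to change.
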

\begin{proof}
Since
$\gamma(r,d)=\gamma(s, e)$, writing $s=r+\delta$
gives $e = d+2\delta$. Simplifying the expression for $\rho$,
we find
\[
\rho(g,r,d)=\rho(g,r+\delta,d+2\delta) = \rho(g,r,d)+\delta(d-g+1)+\delta^2.
\]
Hence we find that either $\delta=0$ and $(i)$ holds, or $\delta=g-d-1$ and $(ii)$ holds. 
\end{proof}

\begin{remark}
	Thus two complete linear systems of type $\g{r}{d}$ and $\g{s}{e}$ are of the same type or of Serre dual type (numerically, $\g{s}{e}=K_C-\g{r}{d}$) if and only if $\rho(g,r,d)=\rho(g,s,e)$ and $\gamma(r,d)=\gamma(s, e)$. In particular, distinct Brill--Noether loci with the same $\rho$ will
	not have the same $\gamma$, and vice versa.  
\end{remark}

For Brill--Noether loci with the same
$\rho$, \Cref{Theorem makx>maxk' implies non-containment} easily
gives one non-containment. A similar result was recently proved by Teixidor i Bigas in \cite{bigas2023brillnoether}.

\begin{cor}\label{cor distinguishing loci with same rho}
Suppose $\rho(g,s, e)=\rho(g,r,d)$ and $g+1\le \floor{\frac{d}{r}}+d, \floor{\frac{e}{s}}+e$. If $\gamma
(r,d)<\gamma(s, e)$, then $\BN{g}{r}{d}\nsubseteq
\BN{g}{s}{e}$.
\end{cor}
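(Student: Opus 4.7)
The plan is to reduce everything to an inequality of gonality invariants and then invoke \Cref{Theorem makx>maxk' implies non-containment}. That is, I will show that under the stated hypotheses, $\maxk(g,r,d)>\maxk(g,s,e)$, from which the non-containment follows immediately.

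To produce the $\maxk$ inequality, I would apply the closed formula of \Cref{prop formula for maxk}. The assumptions $g+1\le \lfloor d/r\rfloor+d$ and $g+1\le \lfloor e/s\rfloor+e$ are exactly the negations of the first branch condition in that proposition, so both $(g,r,d)$ and $(g,s,e)$ fall in the second (``else'') branch. Hence
\[
\maxk(g,r,d)=g+1-\gamma(r,d)+\bigl\lfloor -2\sqrt{-\rho(g,r,d)}\,\bigr\rfloor,
\]
and likewise for $(g,s,e)$. Subtracting, and using the hypothesis $\rho(g,r,d)=\rho(g,s,e)$ to cancel the floor terms, yields
\[
\maxk(g,r,d)-\maxk(g,s,e) \;=\; \gamma(s,e)-\gamma(r,d)\;\ge\;1,
\]
where the final inequality is the Clifford index hypothesis $\gamma(r,d)<\gamma(s,e)$. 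Applying \Cref{Theorem makx>maxk' implies non-containment} gives $\BN{g}{r}{d}\nsubseteq \BN{g}{s}{e}$.

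The only mild point to verify is the standing hypothesis $d\le g-1$ (and $e\le g-1$) needed to invoke \Cref{prop formula for maxk}; this is automatic in the situations of interest, since if $d\ge g$ then $\rho(g,r,d)\ge 0$ forces $\BN{g}{r}{d}=\M_g$, and otherwise we may replace $\g{r}{d}$ by its Serre dual without affecting $\rho$ or $\gamma$. There is no substantial obstacle: once one is in the generic second branch of the $\maxk$ formula, the assertion is essentially a bookkeeping consequence of the fact that $\maxk$ depends on the pair $(\rho,\gamma)$ monotonically in $-\gamma$.
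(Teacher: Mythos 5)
Your core computation is precisely the paper's intended argument: the paper states this corollary without a separate proof (the preceding sentence says \Cref{Theorem makx>maxk' implies non-containment} ``easily gives'' it), and the intended route is the one you take --- both triples fall into the second branch of \Cref{prop formula for maxk}, the floor terms cancel since $\rho(g,r,d)=\rho(g,s,e)$, so $\maxk(g,r,d)-\maxk(g,s,e)=\gamma(s,e)-\gamma(r,d)\ge 1$, and \Cref{Theorem makx>maxk' implies non-containment} concludes.

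The one place you go beyond the paper --- the closing remark dispensing with the hypothesis $d\le g-1$ --- is incorrect as written, on both counts. First, $d\ge g$ does not force $\rho(g,r,d)\ge 0$: for example $\rho(10,5,12)=10-6\cdot 3=-8<0$, and this triple even satisfies your branch condition $g+1=11\le \floor{\tfrac{12}{5}}+12=14$, so such parameters are not excluded by the corollary's stated hypotheses. Second, passing to the Serre dual does not rescue the argument: Serre duality preserves $\rho$, $\gamma$, and $\maxk$, but not the branch condition. In the same example the dual parameters are $(r',d')=(2,6)$ and $g+1=11>\floor{\tfrac{6}{2}}+6=9$, so the dual lands in the \emph{first} branch of \Cref{prop formula for maxk}, where $\maxk=\floor{\tfrac{d'}{r'}}$ and your cancellation of floor terms is unavailable. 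Consequently your proof establishes the corollary only under the standing hypothesis $d,e\le g-1$ of \Cref{prop formula for maxk}; that is evidently what the corollary implicitly assumes, and it holds in all of the paper's applications (expected maximal loci, and loci with $\rho=-1,-2$, have $d\le g-1$ by definition). This is a flaw only in your side remark; the main line of your proof is correct and is the paper's proof.
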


As the expected codimension of a Brill--Noether locus $\BN{g}{r}{d}$ in $\M_g$ is $-\rho(g,r,d)$, one expects non-containments of the form $\BN{g}{r}{d}\nsubseteq \BN{g}{s}{e}$ when $\rho(g,r,d)>\rho(g,s, e)$. Thus it is interesting to find non-containments of Brill--Noether loci in the other direction. We give a general statement on when a Brill--Noether locus is not contained in Brill--Noether divisors (loci with $\rho=-1$).

\begin{prop}\label{prop BN locus contianed in BN divisor}
	Suppose that $\rho(g,s,e)=-1$, $e-2s>d-2r+\left\lceil 2\sqrt{-\rho(g,r,d)}\right\rceil-2$, and $g+1\le \floor{\frac{d}{r}}+d$, then $\BN{g}{r}{d}\nsubseteq\BN{g}{s}{e}$. 
\end{prop}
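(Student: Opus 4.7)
The plan is to deduce the non-containment from \Cref{Theorem makx>maxk' implies non-containment} by establishing $\maxk(g,r,d) > \maxk(g,s,e)$.

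First I would apply the closed formula of \Cref{prop formula for maxk} to both triples. The hypothesis $g+1 \leq \lfloor d/r \rfloor + d$ directly places $(g,r,d)$ in the second case, yielding
\[
\maxk(g,r,d) = g + 1 - (d - 2r) - \lceil 2\sqrt{-\rho(g,r,d)}\, \rceil.
\]
For $(g,s,e)$ with $\rho = -1$, one has $\sqrt{-\rho(g,s,e)}=1$, so provided the second case also applies to $(g,s,e)$, one obtains $\maxk(g,s,e) = g - 1 - (e-2s)$. Substituting both expressions into $\maxk(g,r,d) > \maxk(g,s,e)$ and rearranging gives exactly the Clifford-index hypothesis $e - 2s > d - 2r + \lceil 2\sqrt{-\rho(g,r,d)}\, \rceil - 2$, so \Cref{Theorem makx>maxk' implies non-containment} finishes the argument.

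The one step requiring genuine verification, and thus the main obstacle, is confirming that $(g,s,e)$ lies in the second case of \Cref{prop formula for maxk}, i.e.\ that $g+1 \leq \lfloor e/s \rfloor + e$. To check this I would set $t := g - e + s \geq 1$ and rewrite the condition $\rho(g,s,e) = -1$ as $(s+1)t = g+1$, so that $g = (s+1)t - 1$ and $e = s(t+1) - 1$. A direct computation then gives $e/s = t + 1 - 1/s$, whence $\lfloor e/s \rfloor = t$ for every $s \geq 1$, and consequently $\lfloor e/s \rfloor + e = g + s \geq g + 1$. This confirms the hypothesis of the second case of \Cref{prop formula for maxk} and delivers the formula for $\maxk(g,s,e)$, completing the proof.
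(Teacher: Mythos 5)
Your proposal is correct and follows essentially the same route as the paper: apply the closed formula of \Cref{prop formula for maxk} to both triples, observe that the Clifford-index hypothesis is precisely the inequality $\maxk(g,r,d)>\maxk(g,s,e)$, and conclude by \Cref{Theorem makx>maxk' implies non-containment}. If anything, your verification that $g+1\le\floor{\frac{e}{s}}+e$ via the parametrization $(s+1)(g-e+s)=g+1$ is more complete than the paper's, which records only the case $s=1$ explicitly and leaves the remaining cases to ``simple computations'' (implicitly, the expected maximality of $\rho=-1$ loci together with the claim in the proof of \Cref{maxformula}).
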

\begin{proof}
	Note that if $s=1$, then $\rho(g,s,e)=-1$ implies $g+1\le \floor{\frac{e}{s}}+e$. Simple computations show that when $\rho(g,s,e)=-1$, the condition $\maxk(g,r,d)>\maxk(g,s,e)$ is equivalent to the condition $\gamma(s,e)>\gamma(r,d)+\left\lceil 2\sqrt{-\rho(g,r,d)}\right\rceil-2$.
\end{proof}

\section{Applications to expected maximal Brill--Noether loci}\label{sec: distinguishing maximal BN loci}

\subsection{Formulas for expected maximal loci}
For expected maximal Brill--Noether loci, one can make the formulas for $\rho$ and $\maxk$ more explicit. Given $g$ and $r$ with $r \leq \sqrt{g} - 1$, recall that we write $\dmax(g, r)$ for the degree $d$ so that $\M^r_{g,d}$ is expected maximal, given in \eqref{dmaxdef}.
\begin{lemma} \label{rholem}
	Let $g\bmod r+1$ be the non-negative representative. For an expected maximal Brill--Noether locus $\BN{g}{r}{d}$, we have $-\rho(g,r,d)=r+1-(g\bmod r+1)$.
\end{lemma}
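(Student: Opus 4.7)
The plan is a direct computation via the Euclidean division of $g$ by $r+1$. Write $g = q(r+1) + s$ with integers $q\ge 0$ and $0 \le s \le r$, so that $s = g \bmod (r+1)$ by construction.

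The key step is to simplify the ceiling in the formula $\dmax(g,r) = r + \ceil{gr/(r+1)} - 1$ from \eqref{dmaxdef}. Decomposing
\[
\frac{gr}{r+1} = qr + \frac{sr}{r+1} = qr + s - \frac{s}{r+1},
\]
and using that $s/(r+1) \in [0,1)$ when $0 \le s \le r$, the ceiling evaluates to $qr + s$ uniformly (the boundary case $s = 0$ reducing to $\ceil{qr} = qr$). Hence $d = qr + s + r - 1$.

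A short calculation then gives $g - d + r = q(r+1)+s - (qr+s+r-1) + r = q + 1$, from which
\[
-\rho(g,r,d) = (r+1)(g-d+r) - g = (r+1)(q+1) - q(r+1) - s = r + 1 - s,
\]
which is precisely $r + 1 - (g \bmod (r+1))$. The proof is essentially bookkeeping; the only point requiring a moment's care is the ceiling identity at $s = 0$, which is immediate. I do not anticipate any substantive obstacle.
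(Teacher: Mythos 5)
Your proof is correct and is essentially the same direct computation as the paper's: both substitute $d = \dmax(g,r) = r + \ceil{\frac{gr}{r+1}} - 1$ into $\rho(g,r,d)$ and evaluate the ceiling term in terms of the residue $g \bmod (r+1)$. The only difference is cosmetic bookkeeping—you parametrize via the Euclidean division $g = q(r+1)+s$ and evaluate the ceiling directly, while the paper instead invokes the identity $(r+1)\floor{\frac{-gr}{r+1}} = -gr - (g \bmod (r+1))$; the two routes are interchangeable.
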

\begin{proof}
	We compute 
	\begin{align*}
		\rho(g,r,d_{max}(g,r))&=g-(r+1)(g-d_{max}(g,r)+r)\\
		&= -gr-r^2-r+(r+1)\left( r-1+\left\lceil \frac{gr}{r+1}\right\rceil \right)\\
		&=-gr-r-1+(r+1)\left\lceil \frac{gr}{r+1}\right\rceil.
	\end{align*}
Recalling the identity $y\left\lfloor \frac{x}{y}\right\rfloor =x-
(x\bmod y)$ for integers $x$ and $y > 0$, we see that
\[(r+1)\left\lfloor \frac{-gr}{r+1}\right\rfloor=-gr-(-gr\bmod r+1)=-gr-(g\bmod r+1).\]
	Thus 
	\begin{align*}
		-\rho(g,r,d_{max}(g,r))&=r+gr+1+(r+1)\left\lfloor \frac{-gr}{r+1}\right\rfloor\\
		&=r+gr+1 -gr -(g\bmod r+1)\\
		&= r+1 - (g\bmod r+1). \qedhere
	\end{align*}
\end{proof}

With this formula for $-\rho$, we can simplify our formula for $\maxk$ for expected maximal loci.
\begin{prop} \label{maxformula}
	For an expected maximal Brill--Noether locus $\BN{g}{r}{d}$ with $r\ge 2$, we have \[\maxk(g,r,d)=g+r+2+\left\lfloor \frac{-gr}{r+1} \right\rfloor+ \left\lfloor-2\sqrt{r+1 -(g\bmod(r+1))} \right\rfloor.\]
\end{prop}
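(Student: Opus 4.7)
The strategy is to specialize the two-case closed form for $\maxk$ from \Cref{prop formula for maxk} to expected maximal loci, substitute the formula $-\rho(g,r,d)=r+1-(g\bmod(r+1))$ from \Cref{rholem}, and then simplify. Before invoking \Cref{prop formula for maxk}, one first checks its hypothesis $d\le g-1$: a short computation gives $\lceil gr/(r+1)\rceil=g-\lfloor g/(r+1)\rfloor$, and the bound $r\le \ceil{\sqrt{g}-1}$ from \Cref{rbound} forces $g\ge r(r+1)$ and hence $d\le g-1$.

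The main technical step is to verify that an expected maximal locus with $r\ge 2$ lands in the second branch of \Cref{prop formula for maxk}, i.e.,
\[
g+1 \le \floor{d/r}+d \qquad \text{for } d=\dmax(g,r)=r-1+\lceil gr/(r+1)\rceil.
\]
Using $r\floor{d/r}=d-(d\bmod r)$ and clearing denominators, this is equivalent to
\[
(r+1)\lceil gr/(r+1)\rceil \;\ge\; gr + r + 1 - r^2 + (d\bmod r).
\]
Since $(r+1)\lceil gr/(r+1)\rceil\ge gr$ always, it suffices to check $r^2\ge r+1+(d\bmod r)$; as $(d\bmod r)\le r-1$, this reduces to $r^2\ge 2r$, i.e., $r\ge 2$.

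With the branch identified, the formula becomes
\[
\maxk(g,r,d) = g+1-\gamma(r,d)+\floor{-2\sqrt{-\rho(g,r,d)}}.
\]
Expanding $\gamma(r,d)=d-2r$, substituting the explicit value of $d$, and using the identity $\lceil x\rceil=-\floor{-x}$ gives
\[
g+1-\gamma(r,d) = g+r+2-\lceil gr/(r+1)\rceil = g+r+2+\floor{-gr/(r+1)}.
\]
Plugging $-\rho(g,r,d)=r+1-(g\bmod(r+1))$ from \Cref{rholem} into the square root then yields the claimed formula.

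The only real obstacle is the case-analysis step, and the hypothesis $r\ge 2$ enters there in an essential way: for $r=1$ an expected maximal locus is itself a gonality stratum lying in the first branch with $\maxk(g,1,d)=d$, so the stated formula would not hold.
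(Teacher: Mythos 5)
Your proposal is correct and follows the same overall strategy as the paper: combine \Cref{prop formula for maxk} with \Cref{rholem}, the only real work being to show that an expected maximal locus with $r\ge 2$ falls into the second branch, i.e.\ $g+1\le \floor{d/r}+d$. Where you differ is in how that branch condition is verified. The paper expands $\floor{d/r}+d$ and runs an analytic chain of inequalities (using that $r-\frac{1}{r}\ge \frac{3}{2}$ for $r\ge 2$) to get $\floor{d/r}+d>g+\frac12$, then concludes by integrality. You instead multiply through by $r$, use $r\floor{d/r}=d-(d\bmod r)$ to get an exact integer inequality, and reduce to $r^2\ge r+1+(d\bmod r)$, hence to $r^2\ge 2r$. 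Both are valid; your version is more discrete and makes the role of $r\ge2$ slightly more transparent, while the paper's is shorter. You also spell out the final simplification $g+1-\gamma(r,d)=g+r+2+\floor{-gr/(r+1)}$, which the paper leaves implicit.

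Two minor blemishes, neither of which affects the validity of the proof for $r\ge 2$. First, your justification of $d\le g-1$ is imprecise: the crude bound $r\le\ceil{\sqrt{g}-1}$ alone does \emph{not} force $g\ge r(r+1)$ (e.g.\ $g=10$, $r=3$ satisfies the crude bound but $r(r+1)=12>10$); one needs the refined two-case bound \eqref{rcases} of \Cref{rbound}, or one can simply note that $d\le g-1$ is built into the paper's definition of expected maximal loci. Second, your closing remark is wrong: the stated formula actually \emph{does} hold for $r=1$. For odd $g$ the locus lies in the second branch, and for even $g$ the first branch gives $\maxk=d=g/2$, which coincides numerically with the right-hand side of the formula (since $\floor{-2\sqrt{2}}=-3$). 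What breaks at $r=1$ (even $g$) is only this proof route through the second branch, not the formula itself.
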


\begin{proof}
We claim that if $r \geq 2$, and $d =\dmax(g, r)$, then  $g + 1 \leq \lfloor \frac{d}{r} \rfloor + d$. Once this is established, combining \Cref{rholem} and \Cref{prop formula for maxk} and substituting $d = \dmax(g, r)$ gives the result.
	To prove the claim, we let $d=\dmax(g,r) = r-1+\lceil \frac{rg}{r+1} \rceil$ and expand
	\begin{align*}
	\floor{\frac{d}{r}} + d &= \floor{1-\frac{1}{r}+\frac{1}{r}\ceil{\frac{rg}{r+1}}} + r - 1 +\ceil{\frac{rg}{r+1}} \\
	&> r-\frac{1}{r} - 1 +\frac{1}{r}\ceil{\frac{rg}{r+1}} +\ceil{\frac{rg}{r+1}} \\
	&\geq r - \frac{1}{r} -1 + g
        \geq \frac{1}{2} + g.
	\end{align*}
Above, we have used that $r - \frac{1}{r}$ is increasing for $r \geq \frac{1}{2}$, so the assumption $r \geq 2$ means $r - \frac{1}{r} \geq 2 - \frac{1}{2}$. Thus, we have $\floor{\frac{d}{r}} + d > \frac{1}{2} + g$. Since the left-hand side is an integer, the claim follows.
\end{proof}

\subsection{Non-containments of Brill--Noether loci with \texorpdfstring{$\rho=-1,-2$}{}}\label{subsection noncontainments of BN loci with small rho}

One can easily check that Brill--Noether loci with $\rho=-1,-2$ are
expected maximal. Indeed, $\rho(g,r,d+1)=\rho(g,r,d)+r+1$, and
$\rho(g,r-1,d-1)=\rho(g,r,d)+g-d+r$. As shown in
\cite{Eisenbud_Harris_1989,steffen_1998}, Brill--Noether loci
with $\rho=-1$ are irreducible. Thus to show that such
Brill--Noether loci have distinct support, it suffices to show one
non-containment. Choi, Kim, and Kim~\cite{CHOI2022,CHOI20141458}
prove various
non-containments of the form $\BN{g}{r}{d}\nsubseteq\BN{g}{s}{e}$ when
$\rho(g,r,d)=-2$ and $\rho(g,s,e)=-1$. We provide new proofs of these
non-containments using $\maxk$. We note that Choi, Kim, and Kim also show both non-containments for loci with $\rho=-2$, which we do not prove here.

\begin{theorem}\label{theorem BN loci with rho=-1 are distinct}
	Let $s\neq r$ and $\rho(g,r,d)=\rho(g,s,e)=-1$, then $\BN{g}{r}{d}$ and $\BN{g}{s}{e}$ are not contained in each other.
\end{theorem}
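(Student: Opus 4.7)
The plan is to apply \Cref{Theorem makx>maxk' implies non-containment} by showing that $\maxk(g, r, d) \ne \maxk(g, s, e)$ whenever the two loci are genuinely distinct. Since Brill--Noether loci with $\rho = -1$ are irreducible of codimension one in $\M_g$ (as recalled in the text), any containment between $\BN{g}{r}{d}$ and $\BN{g}{s}{e}$ would force equality; hence a single non-containment automatically yields the other direction as well.

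First I would derive a closed form for $\maxk$ on $\rho = -1$ loci. The hypothesis $\rho(g, r, d) = -1$ gives $(r+1)(g - d + r) = g + 1$, so $(r+1) \mid (g+1)$, and by \Cref{rholem} this forces $g \bmod (r+1) = r$. Substituting into \Cref{maxformula} (or into \Cref{prop formula for maxk} for $r = 1$) and simplifying via $g + 1 = (r+1)m$ yields
\[\maxk(g, r, d) \;=\; \frac{g+1}{r+1} + r - 1 \;=\; \Phi(r+1), \qquad \Phi(t) := \frac{g+1}{t} + t - 2.\]

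Next I would compare $\Phi(r+1)$ with $\Phi(s+1)$. Since $\Phi'(t) = 1 - (g+1)/t^2$, the function $\Phi$ is strictly decreasing on $(0, \sqrt{g+1}\,]$. Each $\rho = -1$ locus agrees with its Serre dual $\BN{g}{g - d + r - 1}{2g - 2 - d}$, and the Serre-dual pair of ranks satisfies $(r+1)\bigl((g - d + r - 1) + 1\bigr) = g + 1$; after replacing each pair by the representative with $r + 1 \leq \sqrt{g+1}$ (and similarly for $(s, e)$), the hypothesis $r \neq s$ still guarantees distinct loci. Assuming without loss of generality that $r < s$, we then have $r + 1 < s + 1 \leq \sqrt{g+1}$, and monotonicity of $\Phi$ gives $\maxk(g, r, d) > \maxk(g, s, e)$, whence \Cref{Theorem makx>maxk' implies non-containment} supplies the required non-containment.

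The one delicate point is the Serre-dual normalization: an equality $\maxk(g, r, d) = \maxk(g, s, e)$ with $r \ne s$ is equivalent to $(r+1)(s+1) = g+1$, which is exactly the condition that $(s, e)$ be Serre dual to $(r, d)$ and thus defines the same subvariety of $\M_g$. Normalizing to the smaller rank in each Serre-dual pair rules this out precisely when the two loci are geometrically distinct, and the monotonicity argument then applies without further complication.
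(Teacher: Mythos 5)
Your proof is correct, and it runs on the same engine as the paper's: compute $\maxk$ on $\rho=-1$ loci via \Cref{rholem} and \Cref{maxformula} (or \Cref{prop formula for maxk}), separate the loci with \Cref{Theorem makx>maxk' implies non-containment}, and use irreducibility plus codimension one to upgrade a single non-containment to both directions. The differences are in the packaging, and two of them are worth recording. First, the paper handles $r=1$ or $s=1$ by citing \cite[Proposition 1.6]{auel_haburcak_2022} and then, for $r,s\ge 2$, invokes \Cref{cor distinguishing loci with same rho}: with equal $\rho$, the locus of smaller Clifford index has strictly larger $\maxk$. Your closed form $\maxk(g,r,d)=\frac{g+1}{r+1}+r-1=\Phi(r+1)$ is that same computation repackaged as a function of the rank alone, and the monotonicity of $\Phi$ on $(0,\sqrt{g+1}\,]$ is equivalent to the paper's $\gamma$-comparison; but it treats $r=1$ uniformly, with no external citation. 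Second, and more substantively, you confront the Serre-duality degeneration head on: when $(r+1)(s+1)=g+1$ the two loci coincide as subvarieties (e.g.\ $\BN{5}{1}{3}=\BN{5}{2}{5}$, both with $\rho=-1$ and $r\neq s$), so the theorem is false as literally stated. The paper's proof is silent here --- \Cref{cor distinguishing loci with same rho} is simply vacuous when $\gamma(r,d)=\gamma(s,e)$, and the exclusion of Serre-dual pairs is only implicit in the earlier remark that ``distinct'' loci with equal $\rho$ have distinct $\gamma$ --- whereas your normalization $r+1\le\sqrt{g+1}$ makes the needed hypothesis precise. One expository caveat: that normalization should really be performed \emph{before} deriving the closed form, since \Cref{prop formula for maxk} (and hence \Cref{maxformula}) requires $d\le g-1$, which for a $\rho=-1$ locus is exactly the condition $(r+1)^2\le g+1$; since $\maxk$ is a Serre-duality invariant and your comparison step only ever evaluates $\Phi$ at normalized representatives, this is an ordering issue, not a gap.
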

\begin{proof}
	If $r=1$ or $s=1$, then this follows from \cite[Proposition 1.6]{auel_haburcak_2022}. Thus we may assume $r,s\ge 2$. The proof of \Cref{maxformula} shows that $g+1\le\floor{\frac{d}{r}}+d,\floor{\frac{s}{e}}+s$. The result now follows from \Cref{cor distinguishing loci with same rho}.
\end{proof}

\begin{theorem}\label{theorem rho=-2 loci not contained in BN divisors improvement}
	Suppose $\rho(g,s,e)=-1$, $\rho(g,r,d)=-2$, and $e-2s>d-2r+1$, then $\BN{g}{r}{d}\nsubseteq\BN{g}{s}{e}$.
\end{theorem}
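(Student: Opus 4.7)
The plan is to apply \Cref{Theorem makx>maxk' implies non-containment} by verifying the inequality $\maxk(g,r,d) > \maxk(g,s,e)$ under the given hypotheses. Since $\rho(g,r,d) = -2$ and $\rho(g,s,e) = -1$, both Brill--Noether loci are expected maximal (as recalled at the start of \Cref{subsection noncontainments of BN loci with small rho}), so the closed formulas for $\maxk$ from \Cref{prop formula for maxk} and \Cref{maxformula} are available.

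In the main range $r \geq 2$, the theorem is essentially a direct instance of \Cref{prop BN locus contianed in BN divisor}: substituting $-\rho(g,r,d) = 2$ into its hypothesis yields $\lceil 2\sqrt{2}\rceil - 2 = 1$, matching exactly the assumed bound $e - 2s > d - 2r + 1$. The side condition $g + 1 \leq \lfloor d/r \rfloor + d$ needed to invoke \Cref{prop BN locus contianed in BN divisor} is established in the proof of \Cref{maxformula} for any expected maximal locus with $r \geq 2$, so no extra work is required. The corresponding side condition for the target $(s,e)$ is handled internally in the proof of \Cref{prop BN locus contianed in BN divisor} (using that, when $s=1$, $\rho(g,s,e) = -1$ forces $e=(g+1)/2$ and $\lfloor e/s \rfloor + e = g+1$).

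The step I expect to be the main obstacle is the edge case $r = 1$, where $\rho(g,1,d)=-2$ forces $d = g/2$ (hence $g$ even) and $\lfloor d/1 \rfloor + d = g < g+1$. This places us in the \emph{first} branch of \Cref{prop formula for maxk} rather than the second, so \Cref{prop BN locus contianed in BN divisor} does not directly apply. Here one computes $\maxk(g,1,g/2) = g/2$ from that first branch. Since $\rho(g,1,e) = -1$ would force $e = (g+1)/2$ with $g$ odd, the subcase $s = 1$ cannot occur simultaneously with $r=1$; thus $s \geq 2$ and the second branch of \Cref{prop formula for maxk} (together with the computation in the proof of \Cref{maxformula}) gives $\maxk(g,s,e) = g - 1 - \gamma(s,e)$. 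The desired inequality $g/2 > g - 1 - \gamma(s,e)$ then simplifies to $\gamma(s,e) > g/2 - 1 = \gamma(r,d) + 1$, which is precisely the hypothesis $e - 2s > d - 2r + 1$.

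Combining the two cases and invoking \Cref{Theorem makx>maxk' implies non-containment} concludes the proof. The only real subtlety is the bookkeeping around the $r=1$ branch of the $\maxk$ formula; once that is dispatched as above, everything else is a transparent substitution into \Cref{prop BN locus contianed in BN divisor}.
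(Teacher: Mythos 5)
Your proof is correct, and for the main case $r \geq 2$ it coincides with the paper's: both reduce the theorem to \Cref{prop BN locus contianed in BN divisor} by noting that $-\rho(g,r,d)=2$ turns its Clifford-index hypothesis into exactly $e-2s>d-2r+1$, with the side condition $g+1\le\floor{\frac{d}{r}}+d$ supplied by the proof of \Cref{maxformula} since the locus is expected maximal. The difference is in the edge case $r=1$: the paper dispatches it by citing \cite[Proposition 1.6]{auel_haburcak_2022}, which gives the stronger statement that the non-containment holds for $r=1$ \emph{without} the hypothesis $e-2s>d-2r+1$; you instead give a self-contained argument inside the paper's own machinery, computing $\maxk(g,1,g/2)=g/2$ from the first branch of \Cref{prop formula for maxk}, ruling out $s=1$ by the parity clash ($\rho(g,1,d)=-2$ forces $g$ even while $\rho(g,1,e)=-1$ forces $g$ odd), and checking that the hypothesis $\gamma(s,e)>\gamma(r,d)+1$ is exactly what makes $g/2 > g-1-\gamma(s,e)=\maxk(g,s,e)$, so that \Cref{Theorem makx>maxk' implies non-containment} applies. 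Your route buys independence from the external K3-based reference and shows the $\maxk$ formalism alone suffices for the theorem as stated; the paper's route is shorter and records the stronger unconditional non-containment when $r=1$. Both are complete proofs of the statement.
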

\begin{proof}
	The case $r=1$ follows from \cite[Proposition 1.6]{auel_haburcak_2022} without the assumption that $e-2s>d-2r+1$. Hence we may assume $r\ge 2$, whereby the proof of \Cref{maxformula} implies that $g+1\le \floor{\frac{d}{r}}+d$.
	
	If $s=1$, then $\rho(g,s,e)=-1$ implies that $g+1\le \floor{\frac{e}{s}}+e$. Thus for any $s$ we have $\maxk(g,s,e)=g+1-\gamma(s,e)-2$. The result now follows from \Cref{prop BN locus contianed in BN divisor}.
\end{proof}
\begin{remark}
	We note that this slightly improves the bound from \cite[Corollary 3.6]{CHOI2022}.
\end{remark}

\begin{remark}\label{remark Montserrat examples}
	In \cite[Example 3.2]{bigas2023brillnoether}, potential containments of expected maximal Brill--Noether loci of the form $\BN{g}{r}{d}\subseteq \BN{g}{s}{e}$ with $\rho(g,r,d)=-2$ and $\rho(g,s,e)=-1$ are given. We briefly recall two such examples, and show that \Cref{prop BN locus contianed in BN divisor} does not address these potential containments.
	
	For the potential containment of the form $\BN{2\alpha^2+\alpha-2}{\alpha-1}{2\alpha^2-4}\subseteq \BN{2\alpha^2+\alpha-2}{\alpha}{2\alpha^2-1}$, computing $\maxk$ shows that the Brill--Noether loci both have $\maxk=3\alpha-2$, hence other techniques are required to prove this non-containment.
	
	For the potential containment of the form $\BN{\alpha^2-2}{\alpha-1}{\alpha^2-3}\subseteq \BN{\alpha^2-2}{\alpha-2}{\alpha^2-5}$, computing $\maxk$ shows that $\maxk(\alpha^2-2,\alpha-1,\alpha^2-3)=2\alpha-3,$ while $\maxk(\alpha^2-2,\alpha-2,\alpha^2-5)=2\alpha-2.$ Thus \Cref{Theorem makx>maxk' implies non-containment} gives a non-containment $\BN{\alpha^2-2}{\alpha-2}{\alpha^2-5}\nsubseteq \BN{\alpha^2-2}{\alpha-1}{\alpha^2-3}$ (which already follows for dimension reasons), but cannot show non-containment in the other direction.
\end{remark}

\subsection{Non-containments of expected maximal Brill--Noether loci}\label{subsection noncontainments of exp max BN loci}
For the expected maximal loci $\M^r_g$, we have that $r \leq
\ceil{\sqrt{g}-1}$ by \Cref{rbound}, and that $\maxk$ is
generally a decreasing function of~$r$. However, it is not strictly decreasing (see \Cref{fig1}).
Thus, one expects \Cref{Theorem makx>maxk' implies non-containment} would generally give non-containments $\M^r_g \nsubseteq \M^s_g$ for $r<s$, but to prove such results, we need to control the variation of $\maxk$.

\begin{figure}[h!]
	\centering
	\subfloat[\centering $g=96$]{\includegraphics[width=0.52\linewidth]{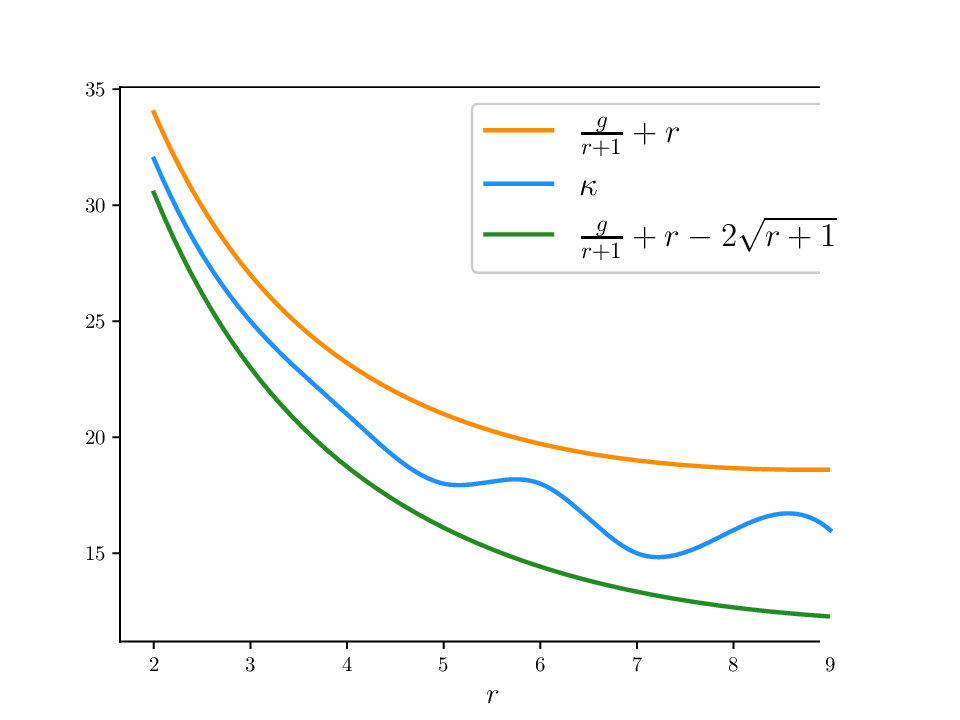}}%
	\qquad
	\hspace{-0.11\linewidth}
	\subfloat[\centering $g=479$]{\includegraphics[width=0.52\linewidth]{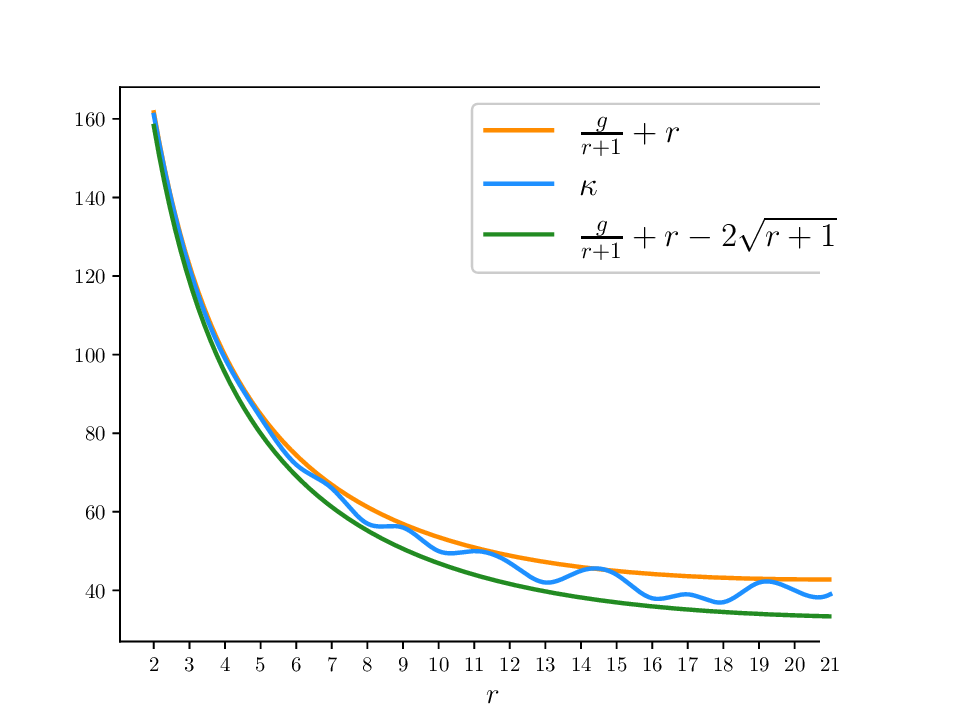}}%
	\caption{Plot of $\maxk(g, r, \dmax(g, r))$.}%
	\label{fig1}
\end{figure}

The first step is to give the following bounds on $\maxk$, pictured by the orange and green curves in \Cref{fig1}.

\begin{lemma}\label{lemma general bounds on maxk}
	For an expected maximal Brill--Noether locus $\BN{g}{r}{d}$, the following inequalities hold.
	\begin{enumerate}[label=(\roman*)]
		\item $\maxk(g,r,d)\le \frac{g}{r+1}+r .$
		\item $\maxk(g,r,d)>  \frac{g}{r+1}+r-2\sqrt{r+1}.$
	\end{enumerate}
\end{lemma}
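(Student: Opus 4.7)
The plan is to derive both bounds directly from the closed-form expression in \Cref{maxformula}. Set $m := g \bmod (r+1)$, so $0 \le m \le r$; with this notation \Cref{rholem} reads $-\rho(g,r,d) = r+1-m$. The identity $-gr \equiv g \pmod{r+1}$ (since $-gr = -g(r+1)+g$) together with the floor formula $y \floor{x/y} = x - (x \bmod y)$ gives
\[
g + \floor{\tfrac{-gr}{r+1}} \;=\; \floor{\tfrac{g}{r+1}} \;=\; \frac{g-m}{r+1}.
\]
Substituting this into \Cref{maxformula} and rewriting $\floor{-2\sqrt{r+1-m}} = -\ceil{2\sqrt{r+1-m}}$ yields the compact form
\[
\maxk(g,r,d) \;=\; \frac{g-m}{r+1} + r + 2 - \ceil{2\sqrt{r+1-m}}.
\]

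Both claimed inequalities then reduce to elementary estimates involving only $m$ and $r$. For (i), subtracting $\frac{g}{r+1}+r$ from both sides turns the desired inequality $\maxk(g,r,d) \le \frac{g}{r+1} + r$ into
\[
\ceil{2\sqrt{r+1-m}} \;\ge\; 2 - \tfrac{m}{r+1}.
\]
Since $r+1-m \ge 1$, the left-hand side is at least $\ceil{2} = 2$, while the right-hand side is at most $2$, so this is immediate.

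For (ii), the same rearrangement reduces $\maxk(g,r,d) > \frac{g}{r+1} + r - 2\sqrt{r+1}$ to the strict inequality
\[
\ceil{2\sqrt{r+1-m}} \;<\; 2\sqrt{r+1} + 2 - \tfrac{m}{r+1}.
\]
Using the strict bound $\ceil{x} < x+1$ and monotonicity of $\sqrt{\cdot}$,
\[
\ceil{2\sqrt{r+1-m}} \;<\; 2\sqrt{r+1-m} + 1 \;\le\; 2\sqrt{r+1} + 1 \;\le\; 2\sqrt{r+1} + 2 - \tfrac{m}{r+1},
\]
the last inequality holding because $m \le r < r+1$.

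I don't anticipate any real obstacle here: once the closed formula is unpacked via $m = g \bmod (r+1)$, both parts amount to a single line of arithmetic. The only mildly delicate point is preserving the strict inequality in (ii), which is handled by invoking $\ceil{x} < x+1$ with the strict sign rather than the weaker $\ceil{x} \le x+1$.
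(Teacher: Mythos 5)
Your arithmetic is correct, and the route is essentially the paper's: the paper likewise derives both bounds from \Cref{maxformula}, only it estimates the two floor terms directly (via $\floor{y}\le y$, $\floor{y}>y-1$, and $1\le -\rho\le r+1$) instead of consolidating them through $m=g\bmod(r+1)$ as you do. Your rewriting $\maxk(g,r,d)=\frac{g-m}{r+1}+r+2-\ceil{2\sqrt{r+1-m}}$ is valid, part (i) follows as you say from $\ceil{2\sqrt{r+1-m}}\ge 2\ge 2-\frac{m}{r+1}$, and the strict inequality in (ii) is correctly preserved since $\ceil{x}<x+1$ holds for every real $x$.

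There is, however, one genuine gap: \Cref{maxformula} is stated (and proved) only for expected maximal loci with $r\ge 2$, whereas the lemma covers \emph{all} expected maximal loci, and the case $r=1$ always occurs, namely $\M^1_g=\BN{g}{1}{\ceil{g/2}}$. Your proof invokes \Cref{maxformula} with no hypothesis on $r$, so $r=1$ is simply not covered; and one cannot quietly extend \Cref{maxformula} to $r=1$, because its proof requires the inequality $g+1\le\floor{\frac{d}{r}}+d$, which fails for $r=1$ and $g$ even (there $d=\dmax(g,1)=g/2$ and $\floor{\frac{d}{r}}+d=g<g+1$), putting one in the other case of \Cref{prop formula for maxk}. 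The paper disposes of this case first: for $r=1$ one has $\maxk(g,1,\dmax(g,1))=\dmax(g,1)=\ceil{\frac{g}{2}}$, which visibly satisfies both $(i)$ and $(ii)$, and only then assumes $r\ge 2$. Adding that one observation at the start would complete your argument.
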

\begin{proof} When $r = 1$, we have $\maxk(g, 1, \dmax(g,1)) = \dmax(g,1) = \ceil{\frac{g}{2}}$, which satisfies the bounds in $(i)$ and $(ii)$. We thus assume $r \geq 2$.

	To prove $(i)$, we first observe that since $-\rho\ge 1$, we have $-2\sqrt{-\rho}\le -2$.
We also trivially have $\left\lfloor \frac{-gr}{r+1}\right\rfloor \le \frac{-gr}{r+1}$, whence $(i)$ follows from \Cref{maxformula}.
	
	To prove $(ii)$, we make similar observations. We first note that since $ r+1-(g\bmod r+1) \le r+1$, we have $-2\sqrt{r+1 -(g\bmod(r+1))} \ge -2\sqrt{r+1}$, thus \[\left\lfloor -2\sqrt{r+1 -(g\bmod(r+1))} \right\rfloor \ge \left\lfloor -2\sqrt{r+1}\right\rfloor > - 2\sqrt{r+1} - 1.\] Trivially we have $\lfloor \frac{-gr}{r+1} \rfloor> \frac{-gr}{r+1}-1$, whence $(ii)$ follows from \Cref{maxformula}.
\end{proof}

These bounds give rise to the following criterion for non-containments.
\begin{prop}\label{prop general non-containment of maximal BN loci from general bounds}
	Let $\delta\ge 1$. If \[f(g,r,\delta)\colonequals(r+1)\delta^2 +\left( (r+1)(r+1+2\sqrt{r+1})-g\right)\delta + 2(r+1)^2\sqrt{r+1}\le0,\] then $\M^r_g \nsubseteq \M^{r+\delta}_g$.
\end{prop}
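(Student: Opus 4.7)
The plan is to combine \Cref{Theorem makx>maxk' implies non-containment} with the two inequalities from \Cref{lemma general bounds on maxk} to reduce the non-containment to a single polynomial inequality in $\delta$, which will turn out to be exactly $f(g,r,\delta) \leq 0$.

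First I would invoke \Cref{Theorem makx>maxk' implies non-containment}: to obtain $\M^r_g \nsubseteq \M^{r+\delta}_g$, it suffices to show
\[
\maxk(g,r,\dmax(g,r)) > \maxk(g,r+\delta,\dmax(g,r+\delta)).
\]
For the left-hand side, I apply part $(ii)$ of \Cref{lemma general bounds on maxk} to get the lower bound $\frac{g}{r+1}+r-2\sqrt{r+1}$, and for the right-hand side, I apply part $(i)$ to get the upper bound $\frac{g}{r+\delta+1}+r+\delta$. Thus it is enough to prove
\[
\frac{g}{r+1}+r-2\sqrt{r+1}\ \geq\ \frac{g}{r+\delta+1}+r+\delta.
\]

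Next I would perform the routine algebraic simplification. Cancelling the $r$ terms, combining the fractions via $\frac{g}{r+1}-\frac{g}{r+\delta+1} = \frac{g\delta}{(r+1)(r+\delta+1)}$, and clearing denominators (both are positive) turns the desired inequality into
\[
g\delta\ \geq\ (r+1)(r+\delta+1)\bigl(\delta + 2\sqrt{r+1}\bigr).
\]
Expanding $(r+1)(r+\delta+1) = (r+1)^2 + (r+1)\delta$ and distributing over $\delta + 2\sqrt{r+1}$ yields
\[
g\delta\ \geq\ (r+1)\delta^2 + (r+1)\bigl(r+1+2\sqrt{r+1}\bigr)\delta + 2(r+1)^2\sqrt{r+1},
\]
which after moving everything to one side is precisely $f(g,r,\delta) \leq 0$.

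I do not expect any real obstacle here; the only mild subtlety is bookkeeping the strict vs.\ non-strict inequalities in the bounds of \Cref{lemma general bounds on maxk}, but since the lower bound in $(ii)$ is strict, the hypothesis $f(g,r,\delta)\leq 0$ suffices to conclude the strict inequality $\maxk(g,r,\dmax(g,r)) > \maxk(g,r+\delta,\dmax(g,r+\delta))$. The assumption $\delta \geq 1$ ensures that the cancellation of $g\delta$ does not produce a sign reversal and that the reduction to $f(g,r,\delta) \leq 0$ is valid. One could also mention that we are implicitly assuming $r+\delta$ still satisfies the rank bound \eqref{rcases}, as otherwise $\M^{r+\delta}_g$ is not defined.
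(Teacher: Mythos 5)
Your proposal is correct and follows exactly the paper's own argument: combine \Cref{Theorem makx>maxk' implies non-containment} with the bounds of \Cref{lemma general bounds on maxk}, and observe that the resulting inequality $\frac{g}{r+1}+r-2\sqrt{r+1}\ge\frac{g}{r+\delta+1}+r+\delta$ is equivalent to $f(g,r,\delta)\le 0$. Your version simply spells out the algebra and the strict/non-strict bookkeeping that the paper leaves implicit.
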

\begin{proof}
	The inequality $f(g,r,\delta)\le0$ is equivalent to \[\frac{g}{r+1}+r-2\sqrt{r+1}\ge\frac{g}{r+\delta+1}+r+\delta.\] The result then follows from \Cref{lemma general bounds on maxk} and \Cref{Theorem makx>maxk' implies non-containment}.
\end{proof}

Considering $f(g, r, \delta)$ as a quadratic polynomial in $\delta$, we notice that in the limit of large $g$, the two roots of $f(g, r, \delta)$ tend to $0$ and $g$.
Thus, for $g$ sufficiently large, $f(g, r, \delta) \leq 0$ for all
$1\le\delta \leq \sqrt{g}$. Since expected maximal loci $\M^s_g$ have
$s \leq \ceil{\sqrt{g} - 1} \leq \sqrt{g}$ by \Cref{rbound}, this implies the non-containment $\M^r_g \not\subseteq \M^s_g$ for all $s > r$.
Below we provide an explicit bound on how large $g$ must be in terms of $r$ to achieve all such non-containments.

\begin{theorem}
\label{Theorem one direction of non-containments of max BN loci}
Fix $r\ge 2$. If
\[g \geq 4(r+1)^{5/2} + (r+1)^2 + 2(r+1)^{3/2},\]
then $\M^r_g \nsubseteq \M^s_g$ for all $s > r$.
\end{theorem}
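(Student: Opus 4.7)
The plan is to apply \Cref{prop general non-containment of maximal BN loci from general bounds} uniformly in the relevant range. By \Cref{rbound}, every expected maximal $\M^s_g$ has $s \le \lceil \sqrt{g}-1 \rceil \le \sqrt{g}$, so $\delta \colonequals s - r$ satisfies $1 \le \delta \le \sqrt{g}$. Since integer non-containments follow from the real inequality, it suffices to show $f(g, r, \delta) \le 0$ for every real $\delta$ in $[1, \sqrt{g}]$.

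Set $R \colonequals r+1$ and $A \colonequals R^2 + 2R^{3/2}$, so that $f(g, r, \delta) = R\delta^2 + (A - g)\delta + 2R^{5/2}$. For $\delta > 0$, dividing by $\delta$ rewrites $f(g,r,\delta) \le 0$ as
\[
g \;\ge\; h(\delta) \colonequals R\delta + A + \frac{2R^{5/2}}{\delta}.
\]
Since $h''(\delta) = 4R^{5/2}/\delta^3 > 0$, the function $h$ is strictly convex on $(0,\infty)$, so its maximum on $[1, \sqrt{g}]$ is attained at an endpoint. It therefore suffices to verify both $g \ge h(1)$ and $g \ge h(\sqrt{g})$ under the hypothesis $g \ge 4R^{5/2} + A$.

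For the lower endpoint, $h(1) = R + A + 2R^{5/2}$; since $R \le 2R^{5/2}$ for $R \ge 1$, we get $h(1) \le A + 4R^{5/2} \le g$. For the upper endpoint, the hypothesis implies $g \ge 4R^{5/2} \ge 4R^2$, hence $\sqrt{g} \ge 2R$ giving $R\sqrt{g} \le g/2$, and $\sqrt{g} \ge 2R^{5/4}$ giving $2R^{5/2}/\sqrt{g} \le R^{5/4}$. Thus $h(\sqrt{g}) \le g/2 + A + R^{5/4}$, so $g \ge h(\sqrt{g})$ follows from $g \ge 2A + 2R^{5/4} = 2R^2 + 4R^{3/2} + 2R^{5/4}$. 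For $r \ge 2$ (i.e., $R \ge 3$) this last inequality is implied by the hypothesis via the elementary comparison $4R^{5/2} \ge R^2 + 2R^{3/2} + 2R^{5/4}$, which follows by dividing by $R^{5/4}$ and noting $4R^{5/4} \ge R^{3/4} + 2R^{1/4} + 2$ for $R \ge 3$.

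The main conceptual step is the convex rewriting $f(g,r,\delta) \le 0 \iff g \ge h(\delta)$, which turns a uniform statement over an interval into the verification of only two endpoint inequalities. After that, the obstacle is purely bookkeeping: tracking the fractional powers of $R$ to see that the hypothesis $g \ge 4R^{5/2} + R^2 + 2R^{3/2}$ is exactly strong enough to absorb the slack produced by the estimates $R\sqrt{g} \le g/2$ and $2R^{5/2}/\sqrt{g} \le R^{5/4}$ used at the upper endpoint.
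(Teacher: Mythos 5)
Your proof is correct, and it rests on the same two pillars as the paper's: \Cref{rbound} to bound the rank of any competing expected maximal locus by $\sqrt{g}$, and \Cref{prop general non-containment of maximal BN loci from general bounds}, so that everything reduces to showing $f(g,r,\delta)\le 0$ for all $\delta$ in the relevant range. Where you diverge is in how that quadratic inequality is verified. The paper works directly with the roots $\delta^{\pm}$ of $f(g,r,\cdot)$: it uses the estimate $1-\sqrt{1-x}\le x$ to show $\delta^-\le 1$ and $\delta^+\ge m-1$, and then needs a second quadratic inequality (in $g$) to conclude $m\ge \sqrt{g}$, checking that the hypothesis on $g$ exceeds the larger root $\tfrac{1}{2}\bigl(3\alpha^4+4\alpha^3+\sqrt{5\alpha^8+8\alpha^7}\bigr)$. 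You instead divide by $\delta>0$ to rewrite $f(g,r,\delta)\le 0$ as $g\ge h(\delta)=R\delta+A+2R^{5/2}/\delta$, observe that $h$ is convex, and thereby reduce a statement over the whole interval $[1,\sqrt{g}]$ (slightly larger, even, than the paper's $[1,\sqrt{g}-1]$) to two endpoint checks, each of which follows from elementary power comparisons in $R$. This is a genuinely cleaner bookkeeping device: it avoids manipulating the square roots in $\delta^{\pm}$ and avoids the auxiliary quadratic in $g$ entirely, at no cost in the strength of the bound. One small point of care, which you handled correctly: the convexity argument needs $r\ge 2$ (i.e., $R\ge 3$) only in the final comparison $4R^{5/4}\ge R^{3/4}+2R^{1/4}+2$, which indeed fails at $R=2$; this mirrors the theorem's standing hypothesis.
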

\begin{proof}
Let $\alpha = \sqrt{r+1}$, so that
\[f(g, r, \delta) = \alpha^2\delta^2 + (\alpha^2(\alpha^2 + 2\alpha) - g) \delta + 2\alpha^5.\]
Setting $m = \frac{1}{\alpha^2}(g - \alpha^2(\alpha^2 + 2\alpha))$, we have
\[\frac{1}{\alpha^2} f(g, r, \delta) = \delta^2 - m\delta + 2\alpha^3. \]
Thus, the roots of $f(g, r, \delta)$ are
\begin{align*}
\delta^{\pm} = \frac{1}{2}(m \pm m) \mp \frac{1}{2}m(1 - \sqrt{1 - 8\alpha^3/m^2}).
\end{align*}
By \Cref{prop general non-containment of maximal BN loci from general bounds}, it suffices to show that $\delta^- \leq 1$ and $\delta^+ \geq \sqrt{g} - 1$. Indeed, if so, then $f(g, r, \delta) \leq 0$ for all $\delta \leq \sqrt{g} - 1$, which implies all desired non-containments.

Note that for $0 \leq x \leq 1$ we have $1 - x \leq \sqrt{1-x}$, so $1 - \sqrt{1 - x} \leq x$.
Thus,
\[\frac{1}{2}m(1 - \sqrt{1 - 8\alpha^3/m^2}) \leq  \frac{4\alpha^3}{m}.\]
If $g \geq 4\alpha^5 + \alpha^4 + 2\alpha^3$, then $m \geq 4\alpha^3$. It follows that $\delta^- \leq 1$ and $\delta^+ \geq m - 1$. It thus remains to show that $m \geq \sqrt{g}$, equivalently $m^2 \geq g$, or equivalently
\[g^2 - (3\alpha^4 + 4\alpha^3)g + \alpha^4(\alpha^2 + 2\alpha)^2 \geq 0. \]
The larger root of this quadratic polynomial in $g$ is at
\[\frac{3\alpha^4 + 4\alpha^3 + \sqrt{5\alpha^8 + 8 \alpha^7}}{2},\]
which one readily checks is less than $4\alpha^5 +  \alpha^4 + 2\alpha^3$ for all $\alpha \geq 1$.
\end{proof}

We have now shown that for each $r$, there exists a smallest $G(r)$
such that
\begin{equation} \label{ineq}
\maxk(g, r, \dmax(g, r)) > \maxk(g, s, \dmax(g, s))
\end{equation}
for all $g \geq G(r)$ and $r < s \leq \floor{\sqrt{g}-\frac{1}{2}}$.
\Cref{Theorem one direction of non-containments of max BN loci} gives
an upper bound for $G(r)$, but it is not optimal. Nevertheless, for
any fixed $r$, one can easily check for each of the finitely many $g
\leq 4(r+1)^{5/2} + (r+1)^2 + 2(r+1)^{3/2}$ if \eqref{ineq} holds for
all $s > r$.  We summarize the resulting values of $G(r)$ for low $r$
below.

\medskip
\begin{center}
	\begin{tabular}{|c||c|c|c|c|c|c|c|c|c|}
		\hline
		$r$ & 2 & 3 & 4 & 5 & 6 & 7 & 8 & 9 & 10 \\
		\hline
		 $G(r)$ & 28 & 50 & 96 & 140 & 232 & 306 & 390 & 561 & 684\\
		\hline
	\end{tabular}
\end{center}

\medskip

\begin{remark}
If we fix $r \geq 2$, then there also exist various $g < G(r)$ such
that \eqref{ineq} holds for all $s > r$. For example, \eqref{ineq}
holds for all $s > r$ when
	\begin{itemize}
		\item $r=2$ and $g\notin \{10,	11,	12,	15,	18,	19, 24,	27\}$;
		\item $r=3$ and $g\notin \{17,	18,	19,	21,	24, 28,	29,	33,	34,	41,	44,	49\}$;
		\item $r=4$ and $g\notin \{26, 27,	28,	29,	30,	32,	35,	40,	41,	45,	46,	47,	48,	50,	52,	53,	55,	62,	65,	70,	71,	77,	95\}$.
	\end{itemize}
\end{remark}

\begin{cor}
Except for $g=7,9$, and possibly $g=24,27$, the expected maximal Brill--Noether locus $\M^2_g$ is maximal.
\end{cor}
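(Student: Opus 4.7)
The plan is to verify the two types of non-containments required for maximality of $\M^2_g$: first $\M^2_g \not\subseteq \M^s_g$ for every expected maximal $\M^s_g$ with $s > 2$, and then $\M^2_g \not\subseteq \M^1_g$. By \Cref{rbound}, these are the only cases to consider.

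For $s > 2$: the table preceding the corollary records exactly those $g$ for which the strict inequality \eqref{ineq} fails for $r = 2$ and some $s > 2$, namely $g \in \{10, 11, 12, 15, 18, 19, 24, 27\}$. For every other $g$, \Cref{Theorem makx>maxk' implies non-containment} yields all required non-containments simultaneously. The subset $\{10, 11, 12, 15, 18, 19\}$ lies within the range $g \leq 19$ where \cite{auel_haburcak_2022} proves the full Maximal Brill--Noether Loci Conjecture, covering those $g$. The remaining $g = 24, 27$ are the "possibly" exceptions: neither the $\maxk$ criterion nor \cite{auel_haburcak_2022} presently handles them.

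For $\M^2_g \not\subseteq \M^1_g$: this follows from \cite{auel_haburcak_2022} for $g \leq 19$ and $g \in \{22, 23\}$, from \Cref{introthm20} for $g = 20$, and from the $\M^2_{21}$ discussion in \Cref{subsec:20} for $g = 21$. For $g \geq 24$, the plan is to combine three tools. When $g \equiv 2 \pmod 6$, \Cref{rholem} gives $\operatorname{codim}(\M^2_g) = 1 < 2 = \operatorname{codim}(\M^1_g)$, so $\M^2_g \not\subseteq \M^1_g$ on dimensional grounds (using irreducibility of $\M^2_g$ in this case). When $g \equiv 5 \pmod 6$, both loci have $\rho = -1$, so \Cref{theorem BN loci with rho=-1 are distinct} applies. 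For the remaining classes $g \equiv 0, 1, 3, 4 \pmod 6$, one constructs a nodal plane curve of degree $\dmax(g,2)$ with $\binom{\dmax(g,2)-1}{2} - g$ general nodes, whose normalization inherits a $\g{2}{\dmax(g,2)}$ from the plane embedding and has gonality $\dmax(g,2) - 2$; since $\dmax(g,2) - 2 > \floor{\frac{g+1}{2}}$ for all $g \geq 7$, this normalization lies in $\M^2_g \setminus \M^1_g$.

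The main obstacle is the two "possibly" exceptions $g = 24, 27$, where the $\maxk$ criterion does not produce all the required $s > 2$ non-containments and no alternative argument is available in this paper. Finally, the exclusion of $g = 7, 9$ reflects the unexpected containments $\M^2_g \subseteq \M^1_g$ described in the introduction, coming from projections of plane models through singular points of multiplicity $\geq 2$.
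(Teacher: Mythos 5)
Your treatment of the $s>2$ non-containments matches the paper: away from $g \in \{10,11,12,15,18,19,24,27\}$ the inequality \eqref{ineq} with $r=2$ and \Cref{Theorem makx>maxk' implies non-containment} do the work, the small genera are covered by \cite{auel_haburcak_2022}, and $g=24,27$ are the stated exceptions. Your $g \equiv 2, 5 \pmod 6$ cases of $\M^2_g \nsubseteq \M^1_g$ are also sound: by \Cref{rholem} one has $-\rho(g,2,\dmax(g,2))=1$ there, so either the codimension count (every component of $\M^2_g$ has codimension exactly $1$ by Steffen's bound, while $\M^1_g$ has codimension $2$ when $g$ is even) or \Cref{theorem BN loci with rho=-1 are distinct} (when $g$ is odd) applies.

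The gap is the nodal plane curve construction you use for $g \equiv 0,1,3,4 \pmod 6$, i.e.\ for most genera $\geq 24$. The claim that the normalization of a degree $d=\dmax(g,2)$ plane curve with $\binom{d-1}{2}-g$ general nodes has gonality $d-2$ is false in this range: every curve of genus $g$ has gonality at most $\floor{\frac{g+3}{2}}$ (the existence half of Brill--Noether theory, which holds for \emph{all} curves), whereas $d-2 = \ceil{\frac{2g}{3}}-1 > \floor{\frac{g+3}{2}}$ once $g \geq 16$. So your two assertions --- gonality equal to $d-2$ and $d-2 > \floor{\frac{g+1}{2}}$ --- are jointly inconsistent with this universal bound exactly where you invoke them. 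Projection from a node does give a $\g{1}{d-2}$, but with $\approx \frac{2g^2}{9}$ nodes the gonality drops far below $d-2$ (e.g.\ a $6$-nodal sextic has genus $4$, hence gonality $3$, not $4$). What you actually need --- that such a normalization has gonality greater than $\floor{\frac{g+1}{2}}$, i.e.\ maximal gonality --- is precisely the hard content, and neither your argument nor any result quoted in the paper supplies it for nodal plane curves. The paper instead handles all $g \geq 14$ at once with a K3 surface: for $(S,H)$ with $\Pic(S)=\Z H \oplus \Z L$, $H.L=\dmax(g,2)$, $L^2=2$, a smooth $C \in |H|$ has general Clifford index, hence general gonality (via \cite[Lemma 8.3]{Knutsen2001}, \cite[Theorem 4.2]{Lelli_Chiesa_2015}, and the argument of \cite[Lemma 6.7 (iii)]{auel_haburcak_2022}), while the restriction sequence for $L$ gives $h^0(C,L\vert_C) \geq h^0(S,L) \geq 3$, so $C$ carries a $\g{2}{\dmax(g,2)}$. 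Substituting that construction (or another source of maximal-gonality curves admitting a $\g{2}{d}$) for your nodal-curve step is necessary; the remainder of your outline then goes through.
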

\begin{proof}
Write $\M^2_g = \M^2_{g,d}$, so that $d = \dmax(g,2)$.  The argument for \cite[Lemma 6.7 (iii)]{auel_haburcak_2022} shows that for a polarized K3 surface $(S,H)$ of genus $g\ge 14$ with Picard group $\Pic(S)=\Z H \oplus \Z L$ with $H.L=d$, and $L^2=2$, a smooth curve $C\in |H|$ has general Clifford index, hence general gonality. Indeed, if $\gamma(C)<\floor{\frac{g-1}{2}}$, one first appeals to \cite[Lemma 8.3]{Knutsen2001} and \cite[Theorem 4.2]{Lelli_Chiesa_2015} and then uses the argument of \cite[Lemma 6.7 (iii)]{auel_haburcak_2022}. We claim that $C$ also has a $g^2_d$, hence $\M^2_g\nsubseteq \M^1_g$. Clearly, $L\vert_C$ is a $g^s_d$ for some $s$, and it suffices to show that $s\ge 2$, as then by adding or subtracting points (as in the trivial containments of Brill--Noether loci), $C$ will have a $g^2_d$. 

To this end, remark that since $L.H>0$, we have $h^2(S,L) = h^0(S,
L^\vee)=0$. Thus applying Riemann--Roch, we see $h^0(S,L)\ge
2+\frac{L^2}{2}\ge 3$. Note also that $(L - H).H < 0$, so $h^0(S, L -
H) =0$. Now taking the long exact sequence in cohomology associated to
the short exact sequence \[0\to L\otimes\calO_S(-C) \to L \to
L\otimes\calO_C \to 0,\] shows that $h^0(C,L\vert_C)\ge h^0(S,L)\ge
3$, whereby $s\ge 2$, as desired. 

Thus $\M^2_g\nsubseteq\M^1_g$ as soon as $g\ge 14$. 
Likewise, as shown in \cite[\S6]{auel_haburcak_2022}, for $6\le g\le 13$, except for $g=7,9$, we also have $\M^2_g\nsubseteq \M^1_g$. As in the above remark, except for possibly $g\in\{10, 11, 12, 15,18, 19, 24, 27\}$, the expected maximal Brill--Noether locus $\M^2_g$ is not contained in any other expected maximal Brill--Noether locus. From \cite{auel_haburcak_2022}, the locus $\M^2_g$ is already known to be maximal when $g=8,10,11,12,15,18,19$.
\end{proof}

\begin{remark}
More generally, a similar argument involving K3 surfaces with $H.L=\dmax(g,r)$ and $L^2=2r-2$ shows that $\M^r_g \nsubseteq \M^1_g$ for $g\ge 14$. 
\end{remark}

\begin{remark}
	In case $\maxk(g,r,\dmax(g,r))=\maxk(g,s,\dmax(g,s))$, other techniques are required to prove non-containments. For example, in genus $24$, $\maxk(24,2,17)=\maxk(24,4,23)$. Since $\rho(24,2,17)=-3$ and $\rho(24,4,23)=-1$, we have a non-containment $\BN{24}{4}{23}\nsubseteq\BN{24}{2}{17}$ for dimension reasons. The reverse containment is unknown.
	
	Similarly, in genus $27$, $\maxk(27,2,19)=\maxk(27,3,23)$. As $\rho(27,2,19)=-3$ and $\rho(27,3,23)=-1$, we have a non-containment $\BN{27}{3}{23}\nsubseteq \BN{27}{2}{19}$. The reverse non-containment is unknown.
\end{remark}

\bigskip


\begin{thebibliography}{10}
	
	\bibitem{auel_haburcak_2022}
	Asher Auel and Richard Haburcak, \emph{{M}aximal {B}rill--{N}oether loci via
		{K}3 surfaces}, arXiv:2206.04610, 2022.
	
	\bibitem{bud2024brillnoether}
	Andrei Bud, \emph{{B}rill--{N}oether loci and strata of differentials}, 2024.
	
	\bibitem{bh_2024maximal}
	Andrei Bud and Richard Haburcak, \emph{Maximal {B}rill--{N}oether loci via
		degenerations and double covers}, 2024.
	
	\bibitem{CHOI2022}
	Youngook Choi and Seonja Kim, \emph{Linear series on a curve of compact type
		bridged by a chain of elliptic curves}, Indagationes Mathematicae (2022),
	844--860.
	
	\bibitem{CHOI2012377}
	Youngook Choi, Seonja Kim, and Young~Rock Kim, \emph{Remarks on
		{B}rill--{N}oether divisors and {H}ilbert schemes}, Journal of Pure and
	Applied Algebra \textbf{216} (2012), no.~2, 377--384.
	
	\bibitem{CHOI20141458}
	\bysame, \emph{{B}rill--{N}oether divisors for even genus}, Journal of Pure and
	Applied Algebra \textbf{218} (2014), no.~8, 1458--1462.
	
	\bibitem{Powell_jensen_fixed_gonality}
	Kaelin Cook-Powell and David Jensen, \emph{Components of {B}rill--{N}oether
		loci for curves with fixed gonality}, 2019.
	
	\bibitem{cook-powell_jensen}
	\bysame, \emph{Components of {B}rill--{N}oether loci for curves with fixed
		gonality}, Michigan Math. J. \textbf{71} (2022), no.~1, 19--45.
	
	\bibitem{eisenbud_harris}
	David Eisenbud and Joe Harris, \emph{The {K}odaira dimension of the moduli
		space of curves of genus {$\ge$} 23}, Inventiones mathematicae \textbf{90}
	(1987), no.~2, 359--387.
	
	\bibitem{Eisenbud_Harris_1989}
	\bysame, \emph{Irreducibility of some families of linear series with
		{B}rill--{N}oether number {$ -1 $}}, Annales scientifiques de l'\'Ecole
	Normale Sup\'erieure \textbf{Ser. 4, 22} (1989), no.~1, 33--53 (en).
	
	\bibitem{Farkas2000}
	Gavril Farkas, \emph{The geometry of the moduli space of curves of genus 23},
	Mathematische Annalen \textbf{318} (2000), no.~1, 43--65.
	
	\bibitem{Farkas_2001}
	\bysame, \emph{{B}rill--{N}oether loci and the gonality stratification of
		{$\mathcal{M}_g$}}, {J}. Reine. Angew. {M}ath. \textbf{2001} (2001), no.~539,
	185--200.
	
	\bibitem{gieseker}
	D.~Gieseker, \emph{Stable curves and special divisors: {P}etri's conjecture},
	Invent. Math. \textbf{66} (1982), no.~2, 251--275.
	
	\bibitem{griffiths_harris}
	Phillip Griffiths and Joseph Harris, \emph{On the variety of special linear
		systems on a general algebraic curve}, Duke Math. J. \textbf{47} (1980),
	no.~1, 233--272.
	
	\bibitem{harris_mumford}
	Joe Harris and David Mumford, \emph{On the {K}odaira dimension of the moduli
		space of curves}, Invent. Math. \textbf{67} (1982), no.~1, 23--86.
	
	\bibitem{jensen_payne_2021recent}
	David Jensen and Sam Payne, \emph{Recent developments in {B}rill--{N}oether
		theory}, 2021, To appear in EMS Volume for the BMS Thematic Einstein Semester
	on Algebraic Geometry, arXiv:2111.00351.
	
	\bibitem{jensen_ranganathan}
	David Jensen and Dhruv Ranganathan, \emph{Brill--{N}oether theory for curves of
		a fixed gonality}, Forum Math. Pi \textbf{9} (2021), Paper No. e1, 33.
	
	\bibitem{Knutsen2001}
	Andreas~Leopold Knutsen, \emph{On kth-order embeddings of {K}3 surfaces and
		{E}nriques surfaces}, Manuscripta Math. \textbf{104} (2001), no.~2, 211--237.
	
	\bibitem{larson_larson_vogt_2020global}
	Eric Larson, Hannah Larson, and Isabel Vogt, \emph{Global {B}rill--{N}oether
		theory over the {H}urwitz space}, 2020, to appear in Geom.\ Topol.,
	arXiv:2008.10765.
	
	\bibitem{larson_refined_BN_Hurwitz}
	Hannah~K. Larson, \emph{A refined {B}rill--{N}oether theory over {H}urwitz
		spaces}, Invent. Math. \textbf{224} (2021), no.~3, 767--790.
	
	\bibitem{lazarsfeld:Brill-Noether_without_degenerations}
	Robert Lazarsfeld, \emph{Brill--{N}oether--{P}etri without degenerations}, J.
	Differ. Geom. \textbf{23} (1986), no.~3, 299--307.
	
	\bibitem{Lelli-Chiesa_the_gieseker_petri_divisor_g_le_13}
	Margherita Lelli-Chiesa, \emph{The {G}ieseker--{P}etri divisor in {$M_g$} for
		{$g\leq 13$}}, Geom. Dedicata \textbf{158} (2012), 149--165.
	
	\bibitem{Lelli_Chiesa_2015}
	\bysame, \emph{Generalized {L}azarsfeld--{M}ukai bundles and a conjecture of
		{D}onagi and {M}orrison}, Adv. Math. \textbf{268} (2015), 529--563.
	
	\bibitem{bigas2023brillnoether}
	{M}ontserrat {T}eixidor~i {B}igas, \emph{{B}rill--{N}oether loci}, 2023,
	arXiv:2308.10581.
	
	\bibitem{Mukai_Curves_and_grassmannians_1993}
	Shigeru Mukai, \emph{Curves and {G}rassmannians}, Algebraic geometry and
	related topics ({I}nchon, 1992), Conf. Proc. Lecture Notes Algebraic Geom.,
	I, Int. Press, Cambridge, MA, 1993, pp.~19--40.
	
	\bibitem{pflueger}
	Nathan Pflueger, \emph{Brill--{N}oether varieties of {$k$}-gonal curves}, Adv.
	Math. \textbf{312} (2017), 46--63.
	
	\bibitem{steffen_1998}
	Frauke Steffen, \emph{A generalized principal ideal theorem with an application
		to {B}rill--{N}oether theory}, Invent. Math. \textbf{132} (1998), no.~1,
	73--89.
	
\end{thebibliography}
\providecommand{\bysame}{\leavevmode\hbox to3em{\hrulefill}\thinspace}
\providecommand{\MR}{\relax\ifhmode\unskip\space\fi MR }
\providecommand{\MRhref}[2]{%
	\href{http://www.ams.org/mathscinet-getitem?mr=#1}{#2}
}
\providecommand{\href}[2]{#2}

\vfill

\end{document}